\let\mathcal\mathscr
\newcommand{\moins}{\mathbin{\fgebackslash}}
\numberwithin{equation}{section}
\theoremstyle{plain}
\newtheorem{prop}[equation]{\propname}
\newtheorem{theo}[equation]{\theoname}
\newtheorem{coro}[equation]{\coroname}
\newtheorem{lemm}[equation]{\lemmname}
\theoremstyle{definition}
\theoremstyle{remark}
\newtheorem{rema}[equation]{\remaname}
\newtheorem{exem}[equation]{\exemname}
\let\cal\mathcal
\let\bb\mathbb
\def\cdo{{\rm Mes}}
\def\cZ{{\widehat\Z}}
\def\point{{\cdot}}
\def\Q{{\bf Q}} \def\Z{{\bf Z}}
\def\C{{\bf C}}
\def\N{{\bf N}}
\def\O{{\cal O}}
\def\dual{{\boldsymbol *}}
\def\bmu{{\boldsymbol\mu}}
\def\epsilon{\varepsilon}
\def\oe{\O_{\cal E}}
\def\woe{\widetilde\O_{\cal E}}
\def\we{\widetilde{\cal E}}
\def\ainf{{\bf A}_{{\rm inf}}}
\def\E{{\bf E}} \def\A{{\bf A}} \def\B{{\bf B}}
\def\tE{\widetilde{\bf E}} \def\tA{\widetilde{\bf A}} \def\tB{\widetilde{\bf B}}
\def\BB{\mathbb B} \def\PP{\mathbb P} \def\GG{\mathbb G} \def\UU{\mathbb U}
\def\piqp{{{\bf P}^1}}
\def\reso{{\text{r\'es}_0}}
\def\matrice#1#2#3#4{{\big(\begin{smallmatrix}#1&#2\\ #3&#4\end{smallmatrix}\big)}}
\def\wotimes{\widehat{\otimes}}
\def\GG{{\mathbb G}} \def\BB{{\mathbb B}} \def\PP{{\mathbb P}} \def\UU{{\mathbb U}}
\def\cZ{\widehat{\Z}}
\begin{document}
\title[Fonctions d'une variable $p$-adique]
{Fonctions d'une variable $p$-adique et repr\'esentations de ${\rm GL}_2(\Q_p)$}
\author{Pierre Colmez}
\address{C.N.R.S., IMJ-PRG, Sorbonne Universit\'e, 4 place Jussieu,
75005 Paris, France}
\email{pierre.colmez@imj-prg.fr}
\author{Shanwen Wang}
\address{School of mathematics, Renmin University of China, 59 ZhongGuanCun Street, 100872 Beijing, P.R. China}
\email{s\_wang@ruc.edu.cn}

\begin{abstract}
Nous \'etendons le dictionnaire entre anneaux de Fontaine et analyse fonctionnelle $p$-adique,
et nous donnons un raffinement de la correspondance de Langlands locale $p$-adique pour les
repr\'esentations de la s\'erie principale de ${\rm GL}_2(\Q_p)$. 
\end{abstract}
\begin{altabstract}
We extend the dictionary between Fontaine rings and $p$-adic functionnal analysis, and we give a refinement
of the $p$-adic local Langlands correspondence for principal series representations of ${\rm GL}_2(\Q_p)$.
\end{altabstract}
\thanks{Pendant l'\'elaboration de cet article la recherche de S.W. a \'et\'e subventionn\'ee par
the Fundamental Research Funds for the Central Universities, and the Research Funds of Renmin University 
of China \no 20XNLG04 et the National Natural Science Foundation of China (Grant \no 11971035); 
P.C. \'etait membre du projet ANR Coloss.}
\setcounter{tocdepth}{1}

\maketitle
{\Small
\tableofcontents}

\section*{Introduction}
 On fixe une cl\^oture alg\'ebrique $\overline{\Q}_p$ de $\Q_p$, 
et on note $G_{\Q_p}$ le groupe de Galois absolu de $\Q_p$
et $H_{\Q_p}\subset G_{\Q_p}$ celui de $\Q_p(\bmu_{p^{\infty}})$. 
 Soit $\C_p$ le compl\'et\'e de $\overline{\Q}_p$ pour la valuation $p$-adique $v_p$,
et soient 
$\widetilde{\bf A}=W(\C_p^\flat)$, $\widetilde{\bf A}^+=W(\O_{\C_p^{\flat}})$ 
et $\widetilde{\bf A}^{-}= \widetilde{\bf A}/\widetilde{\bf A}^+$. 

Notons $\GG$ le groupe ${\bf GL}_2(\Q_p)$, $\BB$ son sous-groupe de Borel sup\'erieur 
et ${\bb P}$ son sous-groupe mirabolique. 

Soit $L$ une extension finie de $\Q_p$ et soit $\O_L$ l'anneau de ses entiers.
Si $\Pi$ est une $L$-repr\'esentation unitaire de $\GG$, de longueur finie, et si ${\bf V}(\Pi)$
est la repr\'esentation de $G_{\Q_p}$ qui lui est associ\'ee par la correspondance de Langlands locale
$p$-adique, il est prouv\'e dans~\cite{CW} que l'on dispose 
d'une injection naturelle 
$\Pi\hookrightarrow (\tA^-\otimes {\bf V}(\Pi))^{H_{\Q_p}}$, 
$\PP$-\'equivariante.
Ceci joue un grand r\^ole pour
construire un mod\`ele de Kirillov ``en famille'' que l'on peut comparer au mod\`ele de Kirillov de
la cohomologie compl\'et\'ee de la tour des courbes modulaires
et obtenir de la sorte une nouvelle preuve de la factorisation d'Emerton.

Maintenant, $\Pi$ contient le sous-$\PP$-module 
$(\tA\otimes {\bf V}(\Pi))^{H_{\Q_p}}/(\tA^+\otimes {\bf V}(\Pi))^{H_{\Q_p}}$ et le
quotient $J(\Pi)$ est de dimension finie (c'est le module de Jacquet de $\Pi$).
Ce quotient s'injecte dans $H^1(H_{\Q_p},\tA^+\otimes{\bf V}(\Pi))$ qui est un espace
de dimension infinie, et la question se pose de d\'ecrire l'image: contient-elle
de l'information sur $\Pi$ qui n'est pas encod\'ee par ${\bf V}(\Pi)$?
Le but de ce petit article est de montrer que "oui".
Notons que, si ${\bf V}(\Pi)$ est irr\'eductible,
 la question ne se pose que pour les repr\'esentations de la s\'erie
principale car sinon $J(\Pi)=0$ et on peut reconstruire $\Pi$ \`a partir de ${\bf V}(\Pi)$.

Dans le cas d'une s\'erie principale, ${\bf V}(\Pi)$ est de dimension~$1$, et on est ramen\'e
\`a comprendre le $\PP$-module $(\O_L\point\tA^-)^{H_{\Q_p}}$. Pour que le r\'esultat
soit plus esth\'etique, on tord l'action naturelle de $\PP$ par le caract\`ere $\chi$
donn\'e par $\chi\big(\matrice{a}{b}{0}{1}\big)=a|a|$.  Posons
$$\woe:=(\O_L\point\tA)^{H_{\Q_p}},\quad \woe^+:=(\O_L\point\tA^+)^{H_{\Q_p}},\quad
\woe^-:=\woe/\woe^+,\quad \breve{\O}_L=\O_L\point W(\overline{\bf F}_p)$$
D'apr\`es le dictionnaire d'analyse fonctionnelle $p$-adique~\cite[\S\,IV.3]{mira}, 
on dispose
d'un isomorphisme naturel, $\PP$-\'equivariant,
$$\woe^-\otimes\chi\cong{\cal C}(\Q_p,\O_L)_0$$
o\`u le $0$ en indice signifie ``tend vers $0$ en $\infty$''.
On a alors le r\'esultat suivant:
\begin{theo}\label{ext1}
{\rm (i)}
L'isomorphisme $\woe^-\otimes\chi\cong{\cal C}(\Q_p,\O_L)_0$ peut s'\'etendre, de mani\`ere unique,
en une injection $\PP$-\'equivariante\footnote{Si $W$ est un espace de fonctions sur un ensemble $X$,
on note $\{{\rm Ctes}\}$ le sous-espace des fonctions constantes.}
$$(\O_L\point\tA^-)^{H_{\Q_p}}\otimes\chi\hookrightarrow {\cal C}(\Q_p,\O_L)/\{{\rm Ctes}\}$$
{\rm (ii)} L'injection du {\rm(i)} s'inscrit dans un diagramme commutatif $\PP$-\'equivariant
\[
\xymatrix@R=4mm@C=5mm{0 \ar[r] & \widetilde{\O}_{\cal E}^{-}\otimes\chi\ar[r]\ar[d]^-{\wr}
& (\O_L\point\tA^-)^{H_{\Q_p}}\otimes\chi\ar[r] \ar@{^{(}->}[d]
& H^1({H_{\Q_p}}, \breve{\O}_L\otimes\chi)\ar[r]\ar@{^{(}->}[d] & 0 \\
0\ar[r]& {\cal C}(\Q_p, \O_L)_0\ar[r] 
& \frac{{\cal C}(\Q_p, \O_L)}{\{{\rm Ctes}\}}\ar[r] 
& \frac{{\cal C}(\Q_p, \O_L)}{{\cal C}({\bf P}^1, \O_L)}\ar[r] & 0
}.  \]
\hskip4mm{\rm (iii)} L'image de $H^1({H_{\Q_p}}, \breve{\O}_L\otimes\chi)$ s'identifie
au sous-espace des fonctions continues sur $\widehat{\Q_p^\dual}$ {\rm(pour la topologie profinie)}
modulo celui des fonctions constantes\footnote{Voir~le th.\,\ref{image} et ce qui le pr\'ec\`ede
pour la signification de cet \'enonc\'e.}.

{\rm (iv)} Si $\delta:\Q_p^\dual\to\O_L^\dual$ est un caract\`ere unitaire continu, on dispose
d'une fl\`eche naturelle
$H^1(G_{\Q_p},L(\delta))\to L\otimes_{\O_L}H^1({H_{\Q_p}}, \breve{\O}_L\otimes\chi)$ et l'image
de l'application compos\'ee est:

\quad $\bullet$ $L\cdot\chi^{-1}\delta$ si $\delta\neq 1,\chi$,

\quad $\bullet$ $L\cdot v_p\oplus L\cdot\log$ si $\delta=\chi$,

\quad $\bullet$ $0$ si $\delta=1$.
\end{theo}
\begin{rema}\label{ext2}
L'isomorphisme $H^1({H_{\Q_p}}, \breve{\O}_L))\overset{\sim}{\to}{\cal C}(\widehat{\Q_p^\dual},\O_L)/
{\{{\rm Ctes}\}}$
a l'air naturel, mais la construction le rend assez myst\'erieux
(et la preuve de son existence est non triviale).
Admet-il une construction plus transparente?
\end{rema}

Soit $B(\delta_1,\delta_2):={\rm Ind}_{\BB}^{\GG}(\delta_2\otimes\delta_1\chi^{-1})$.
Alors ${\bf V}(B(\delta_1,\delta_2))=L(\delta_1)$, et donc le foncteur ${\bf V}$ perd
la trace de $\delta_2$.  Mais on a un diagramme commutatif de $\PP$-modules
\[\xymatrix@R=4mm@C=5mm{ 0 \ar[r]  & {\cal C}(\Q_p, \O_L)_0\otimes \delta_1\chi^{-1}\ar[r]\ar@{=}[d]   
&  B(\delta_1, \delta_2)\ar[r] \ar[d] & J(B(\delta_1,\delta_2))\ar[r] \ar[d]& 0 \\
0         \ar[r] & {\cal C}(\Q_p, \O_L)_0\otimes \delta_1\chi^{-1}\ar[r] & (\widetilde{\bf A}^{-}\otimes \delta_1)^{H_{\Q_p}} \ar[r] & H^1({H_{\Q_p}}, \breve{\O}_L\otimes \delta_1)  \ar[r]& 0
}\]
et l'image de $J(B(\delta_1,\delta_2))$ dans $H^1({H_{\Q_p}}, \breve{\O}_L\otimes \delta_1)$
est le sous-espace $H^1(G_{\Q_p},L(\delta_1\delta_2^{-1}))$ si $\delta_1\delta_2^{-1}\neq 1,\chi$
(ce groupe est alors de dimension~$1$).
Autrement dit, l'injection $B(\delta_1,\delta_2)\hookrightarrow(\tA^-\otimes {\bf V}(B(\delta_1,\delta_2)))$
encode une information galoisienne infinit\'esimale.  

\begin{rema}\label{ext3}
(i) Si $\delta_1\delta_2^{-1}=\chi$, au lieu
de $B(\delta_1,\delta_2)$, il faut consid\'erer une extension de $\chi^{-1}\delta_1$ par 
${\rm St}\otimes\chi^{-1}\delta_1$, o\`u ${\rm St}$ est la steinberg, et alors l'information
galoisienne infinit\'esimale encod\'ee est plus subtile car $H^1(G_{\Q_p},L(\delta_1\delta_2^{-1}))$
est de dimension~$2$ (cf.~prop.\,\ref{ext13} et th.\,\ref{catego}).  

(ii) Berger et Vienney~\cite{BV} \'etablissent une bijection entre (mod~$p$) 
repr\'esentations lisses de $\PP$, irr\'eductibles, de dimension infinie, et (mod~$p$) repr\'esentations
irr\'eductibles de $G_{\Q_p}$ (ou, ce qui revient au m\^eme, (mod~$p$) 
$(\varphi,\Gamma)$-modules irr\'eductibles). Les r\'esultats ci-dessus sugg\`erent que, si on sort du
cadre irr\'eductible, les repr\'esentations de dimension finie de $\PP$ ont un r\^ole \`a jouer; il
est possible que la correspondance de~\cite{BV} admette une version cat\'egorifi\'ee (un erzatz
grossier de la correspondance de Langlands locale cat\'egorique~\cite{EGH}).

(iii) Dans le cas pathologique $\delta_1=\delta_2$,
le groupe $H^1(G_{\Q_p},L(\delta_1\delta_2^{-1}))$ meurt dans $H^1({H_{\Q_p}}, \breve{\O}_L\otimes \delta_1)$
et il ne semble pas y avoir d'information galoisienne cach\'ee dans $J(B(\delta_1,\delta_2))$.
\end{rema}

\section{Anneaux de Fontaine}\label{chap1}
\subsubsection*{Le groupe de Galois absolu de $\Q_p$}
 On fixe une cl\^oture alg\'ebrique $\overline{\Q}_p$ de $\Q_p$, 
et on note $\C_p$ le compl\'et\'e $p$-adique de $\overline{\Q}_p$ et $\O_{\C_p}$ son anneau d'entiers.

Soient
$G_{\Q_p}$ le groupe de Galois absolu de $\Q_p$ et 
$\chi: G_{\Q_p}\rightarrow \Z_p^{\dual}$ le caract\`ere cyclotomique.
 Si $F_\infty=\Q_p(\bmu_{p^{\infty}})$, alors on a 
 $H_{\Q_p}={\rm Gal}(\overline{\Q}_p/F_\infty)=\ker(\chi)$, 
ce qui permet de voir $\chi$ aussi comme un isomorphisme de $\Gamma={\rm Gal}(F_\infty/\Q_p)$ sur $\Z_p^{\dual}$;
 on note $a\mapsto \sigma_a$ son inverse.
 
Soit $H_{\Q_p}'\subset H_{\Q_p}$ le groupe de Galois absolu 
de l'extension ab\'elienne maximale $\Q_p^{\rm ab}$ de $\Q_p$.
 Donc $G_{\Q_p}/H_{\Q_p}'= {\rm Gal}(\Q_p^{\rm ab}/\Q_p)$ 
s'identifie au compl\'et\'e profini $\widehat{\Q_p^{\dual}}$
de $\Q_p^{\dual}$ (via cette identification $\chi$ correspond \`a $x\mapsto x|x|$), 
et $H_{\Q_p}/H_{\Q_p}'={\rm Gal}(\overline{\bf F}_p/{\bf F}_p)$.

\subsubsection*{L'anneau $\tA$ et ses sous-objets}
Soit $\widetilde{\E}$ le corps $\C_p^\flat$.
 Il est muni d'une valuation $v_{\E}$.
 On note $\widetilde{\E}^+=\O_{\C_p^\flat}$ l'anneau de ses entiers 
et $\widetilde{\E}^{++}$ l'id\'eal maximal de $\widetilde{\E}^+$.
 On a une d\'ecomposition $\widetilde{\E}^+=\overline{\bf F}_p\oplus \widetilde{\E}^{++}$.

Soient\footnote{L'anneau $\tA^+$ est souvent not\'e $\ainf$.}
 $\tA:=W(\tE)$, $\tA^+:=W(\tE^+)$ et $\tA^{++}:=W(\tE^{++})$.
On a comme ci-dessus une d\'ecomposition $\widetilde{\A}^+=W(\overline{\bf F}_p)\oplus\widetilde{\A}^{++}$.

Les anneaux $\widetilde{\E}$ et $\widetilde{\A}$ sont munis d'actions de $\varphi$ et $G_{\Q_p}$ commutant entre elles, l'action de $\varphi$ \'etant bijective.
Les sous-anneaux $\tE^+$ et $\tA^+$ 
sont stables sous l'action de $\varphi$ et $G_{\Q_p}$;
il en est de m\^eme des id\'eaux  $\tE^{++}$ et $\tA^{++}$.
Cela munit les quotients $\tE^{-}:=\tE/\tE^+$ et $\tA^-:=\tA/\tA^+$ d'actions de $\varphi$ et $G_{\Q_p}$.

\subsubsection*{L'anneau $\tA_{\Q_p}$ et ses sous-objets}
Soient $\epsilon=(1,\zeta_p,\dots)\in\tE^+$ et $\pi:=[\epsilon]-1\in\tA^+$.
On note $\A_{\Q_p}^+$ le sous-anneau $\Z_p[[\pi]]$ de $\tA^+$ et $\A_{\Q_p}$ l'adh\'erence
de $\A_{\Q_p}^+[\frac{1}{\pi}]$ dans~$\tA$. 
On a $\varphi(\pi)=(1+\pi)^p-1$ et $\sigma(\pi)=(1+\pi)^{\chi(\sigma)}-1$, si $\sigma\in G_{\Q_p}$;
on en d\'eduit que $\A_{\Q_p}^+$ et $\A_{\Q_p}$ sont stables par $\varphi$ et $G_{\Q_p}$.

Enfin, soient $\tA_{\Q_p}^+$ et $\tA_{\Q_p}$ les adh\'erences des cl\^otures radicielles
de $\A_{\Q_p}^+$ et $\A_{\Q_p}$ dans $\tA$: si $\tE_{\Q_p}$ est le compl\'et\'e de la cl\^oture
radicielle de ${\bf F}_p((\epsilon-1))$, on a $\tA_{\Q_p}=W(\tE_{\Q_p})$ et
$\tA_{\Q_p}^+=W(\tE_{\Q_p}^+)$.  On a aussi 
$$\tA_{\Q_p}=\tA^{H_{\Q_p}} \quad{\rm et}\quad \tA_{\Q_p}^+=(\tA^+)^{H_{\Q_p}}.$$
Par contre, $(\tA^-)^{H_{\Q_p}}=(\tA/\tA^+)^{H_{\Q_p}}$ est strictement plus gros que $\tA_{\Q_p}/\tA_{\Q_p}^+$
et c'est au lien entre ces deux modules que nous allons nous int\'eresser dans cet article.

\subsubsection*{Extension des scalaires}
Soit $L$ une extension finie de $\Q_p$.
Si $\Lambda$ est une $\Z_p$-alg\`ebre, posons $\O_L\point\Lambda:=\O_L\otimes_{\Z_p}\Lambda$.
Soient $$\oe:=\O_L\point\A_{\Q_p},\quad \oe^+:=\O_L\point\A_{\Q_p}^+,\quad
\woe:=\O_L\point\tA_{\Q_p},\quad \woe^+:=\O_L\point\tA_{\Q_p}^+.$$
Alors $\oe^+=\O_L[[\pi]]$ et $\oe$ est l'anneau des s\'eries de Laurent $\sum_{k\in\Z} a_k\pi^k$,
 avec $a_k\in \O_L$ et $\lim_{k\to-\infty}a_k=0$.
On pose aussi $\oe^-:=\oe/\oe^+$ et $\woe^-:=\woe/\woe^+$. 

On \'etend les actions de $\varphi$ et $G_{\Q_p}$ \`a ces modules par $\O_L$-lin\'earit\'e. On a
$$\woe=(\O_L\point\tA)^{H_{\Q_p}},\quad \woe^+=(\O_L\point\tA^+)^{H_{\Q_p}},\quad
\woe^-\subsetneq(\O_L\point\tA^-)^{H_{\Q_p}}.$$

\subsubsection*{Action du mirabolique}
Notons $\GG$ le groupe ${\bf GL}_2(\Q_p)$, $\BB=\matrice{*}{*}{0}{*}$ son borel sup\'erieur,
 ${\bb P}=\matrice{*}{*}{0}{1}$ son mirabolique, et $\PP^+\subset\PP$ le semi-groupe
$\matrice{\Z_p\moins\{0\}}{\Z_p}{0}{1}$. 
On peut combiner les actions de $G_{\Q_p}$ (qui agit \`a travers son quotient $\Gamma\cong\Z_p^\dual$)
et $\varphi$ sur $\woe$, $\woe^+$ et $\woe^-$
en une action de $\PP$, gr\^ace \`a la formule
\[\matrice{p^ka}{b}{0}{1}\cdot f=[\epsilon^b]\varphi^k(\sigma_a(f)), 
\text{ si } k\in \Z, a\in \Z_p^{\dual} \text{ et } b\in \Q_p, \] 
La restriction de cette action \`a $\PP^+$ laisse stables $\oe$, $\oe^+$ et $\oe^-$.

\section{Analyse $p$-adique}\label{chap2}
\Subsection{Dictionnaire d'analyse fonctionnelle $p$-adique \cite[\S I.1]{gl2}}
\subsubsection{L'analyse sur $\Z_p$}

Si $\mu\in \cdo(\Z_p, \O_L)$ est une mesure sur $\Z_p$ \`a valeurs dans~$\O_L$,
sa transform\'ee d'Amice 
$$A_{\mu}(\pi)=\int_{\Z_p}(1+\pi)^x\mu$$
 appartient \`a $\oe^+$.
Dans l'autre sens, si $f\in \oe$, on d\'efinit $\phi_f: \Z_p\rightarrow L$ par la formule
\[\phi_f(x)=\reso((1+\pi)^{-x}f(\pi)\tfrac{d\pi}{1+\pi}). \]
On munit $\cdo(\Z_p, \O_L)$ et ${\cal C}(\Z_p, \O_L)$ de l'action naturelle 
de $\PP^+$:
\begin{align*}
\int_{\Z_p}\phi(x)\,\matrice{a}{b}{c}{d}\mu:=\int_{\Z_p}\phi(ax+b)\,\mu\\
(\matrice{p^ka}{b}{0}{1}\cdot\phi)(x)=\begin{cases}\phi(\frac{x-b}{p^ka}) & \text{si } x\in b+p^k\Z_p; \\ 
0 & {\text{sinon.}}\end{cases}
\end{align*}
On a alors le r\'esultat suivant qui fournit un dictionnaire entre l'analyse fonctionnelle sur $\Z_p$
et l'anneau $\oe$ (cf.~\cite[th.\,0.1]{gl2} -- et~\cite[prop.\,1.3]{durham} pour la torsion par $\chi^{-1}$).
\begin{prop}\label{ext4}
{\rm (i)} $\mu\mapsto A_\mu$ induit un isomorphisme $\PP^+$-\'equivariant 
$$\cdo(\Z_p, \O_L)\overset{\sim}{\to}\oe^+.$$

{\rm (ii)} $f\mapsto\phi_f$ induit isomorphisme $\PP^+$-\'equivariant
$$\oe^-\overset{\sim}{\to}{\cal C}(\Z_p,\O_L)\otimes\chi^{-1}$$
o\`u $\chi(x)=x|x|$, vu comme caract\`ere de $\PP^+$ par $\chi\big(\matrice{a}{b}{0}{1}\big):=\chi(a)$.
\end{prop}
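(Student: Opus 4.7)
Part (i) is the Amice transform, and the plan is to deduce it from Mahler's theorem. Expanding $(1+\pi)^x=\sum_{n\ge 0}\binom{x}{n}\pi^n$, valid in $\oe^+$ for $x\in\Z_p$, gives $A_\mu(\pi)=\sum_n b_n\pi^n$ with $b_n=\int_{\Z_p}\binom{x}{n}\,\mu$. Since $\bigl(\binom{x}{n}\bigr)_{n\ge 0}$ is a Banach basis of ${\cal C}(\Z_p,\O_L)$ by Mahler, its continuous dual $\cdo(\Z_p,\O_L)$ is identified with the $\O_L$-module of bounded sequences in $\O_L$, hence with $\O_L[[\pi]]=\oe^+$ via $\mu\mapsto A_\mu$. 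For $\PP^+$-equivariance, with $g=\matrice{p^k a}{b}{0}{1}$ the adjoint relation $\int\phi\cdot g\mu=\int(g\cdot\phi)\mu$ applied to $\phi(x)=(1+\pi)^x$ gives
\[
A_{g\mu}(\pi)=\int_{\Z_p}(1+\pi)^{p^k ax+b}\,\mu=(1+\pi)^b\cdot A_\mu\bigl((1+\pi)^{p^k a}-1\bigr)=[\epsilon^b]\varphi^k\sigma_a(A_\mu),
\]
matching the prescribed $\PP^+$-action on $\oe^+$.

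For (ii), the plan is to compute $\phi_f$ explicitly on a Laurent expansion. Writing $f=f_++f_-$ with $f_+\in\oe^+$ and $f_-=\sum_{n\ge 1}a_{-n}\pi^{-n}$ (with $a_{-n}\to 0$), I observe that $(1+\pi)^{-x-1}\,d\pi$ is regular at $\pi=0$, so $\phi_{f_+}=0$. Expanding $(1+\pi)^{-x-1}=\sum_m\binom{-x-1}{m}\pi^m$ and reading off the coefficient of $\pi^{-1}$ in $f\cdot(1+\pi)^{-x-1}$ yields
\[
\phi_f(x)=\sum_{m\ge 0}(-1)^m a_{-1-m}\binom{x+m}{m},
\]
using $\binom{-x-1}{m}=(-1)^m\binom{x+m}{m}$. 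The polynomials $\binom{x+m}{m}$ take values in $\Z_p$ on $\Z_p$, have degree $m$ and leading coefficient $1/m!$, and by Chu--Vandermonde $\binom{x+m}{m}=\sum_{j=0}^m\binom{m}{j}\binom{x}{j}$, so they are related to the Mahler basis by a unitriangular integer change of basis and form another Banach basis of ${\cal C}(\Z_p,\O_L)$. This gives the bijection $\oe^-\overset{\sim}{\to}{\cal C}(\Z_p,\O_L)$ of $\O_L$-modules.

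For $\PP^+$-equivariance with twist $\chi^{-1}$ (noting $\chi(\matrice{a}{0}{0}{1})=a$ for $a\in\Z_p^\dual$ and $\chi(\matrice{p}{0}{0}{1})=p\cdot|p|=1$), I would verify the identity on the three types of generators. Multiplication of $f$ by $[\epsilon^b]=(1+\pi)^b$ converts $(1+\pi)^{-x}$ into $(1+\pi)^{-(x-b)}$, hence $\phi_{[\epsilon^b]f}(x)=\phi_f(x-b)$ (no twist, as $\chi$ is trivial on the unipotent). For $\matrice{a}{0}{0}{1}$ with $a\in\Z_p^\dual$, the formal substitution $\pi'=(1+\pi)^a-1$ is invertible and its Jacobian $\frac{d\pi}{1+\pi}=a^{-1}\frac{d\pi'}{1+\pi'}$ produces $\phi_{\sigma_a(f)}(x)=a^{-1}\phi_f(a^{-1}x)$, which is exactly the $\chi^{-1}$-twisted action. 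The delicate case is $\matrice{p}{0}{0}{1}$: here $\varphi$ is a degree-$p$ endomorphism with no bijective substitution available, and the required identity $\phi_{\varphi(f)}(x)=\phi_f(x/p)\,\mathbf 1_{p\Z_p}(x)$ (with trivial twist since $\chi(p)=1$) is the main obstacle. It can be handled by a direct Laurent-series analysis of $\reso\bigl(\varphi(f)(1+\pi)^{-x-1}d\pi\bigr)$, grouping terms according to congruence classes mod $p$ in the expansion of $f(u^p-1)$ at $u=1$. A more conceptual alternative is to set up the residue pairing $(f,g)\mapsto\reso\!\bigl(fg\,\tfrac{d\pi}{1+\pi}\bigr)$ between $\oe^-$ and $\oe^+$, whose non-$\PP^+$-invariance encodes precisely the twist $\chi^{-1}$ (coming from how $\frac{d\pi}{1+\pi}$ transforms), and deduce the full statement of (ii) from (i) by duality.
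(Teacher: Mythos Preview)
The paper does not give a proof of this proposition; it merely cites \cite[th.\,0.1]{gl2} and \cite[prop.\,1.3]{durham}. So there is nothing in the paper to compare against directly. Your argument is the standard one from those references and is essentially correct.

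A few remarks on the details. In (i) everything is clean. In (ii), your explicit formula $\phi_f(x)=\sum_{m\ge 0}(-1)^m a_{-1-m}\binom{x+m}{m}$ and the Chu--Vandermonde unitriangularity argument are correct and give the $\O_L$-module isomorphism. The unipotent and $\Z_p^\dual$ equivariances are fine (the residue is invariant under the invertible substitution $\pi\mapsto (1+\pi)^a-1$, which is what makes the Jacobian computation legitimate). For $\varphi$, you only sketch the argument. The cleanest way to close it is the identity $\reso\bigl(\varphi(z)\tfrac{d\pi}{1+\pi}\bigr)=\reso\bigl(z\tfrac{d\pi}{1+\pi}\bigr)$ on $\oe$ (this is exactly the property singled out in the paper's Prop.\,\ref{ext5}): if $x=py$ then $(1+\pi)^{-x}=\varphi\bigl((1+\pi)^{-y}\bigr)$ and the identity gives $\phi_{\varphi(f)}(x)=\phi_f(y)$; if $x\not\equiv 0\pmod p$ one uses the decomposition $\oe=\bigoplus_{i=0}^{p-1}(1+\pi)^i\varphi(\oe)$ together with $\reso\bigl((1+\pi)^i\varphi(g)\tfrac{d\pi}{1+\pi}\bigr)=0$ for $1\le i\le p-1$, which is the content of your ``grouping by congruence classes'' remark. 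Your alternative duality route via the residue pairing also works and is in fact how \cite{durham} packages the twist by $\chi^{-1}$; just be careful that the natural pairing $\int\phi\,\mu$ is not $\PP^+$-invariant, and it is precisely the defect (coming from $\tfrac{d\pi}{1+\pi}\mapsto a\,\tfrac{d\pi}{1+\pi}$ under $\sigma_a$ and from the trace behaviour under $\varphi$) that produces the $\chi^{-1}$.
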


\subsubsection{L'analyse sur $\Q_p$}
Le dictionnaire d'analyse fonctionnelle $p$-adique s'\'etend \`a $\Q_p$, voir \cite[IV.3]{mira}. 
On note $\cdo(\Q_p, \O_L)_{\rm pc}$ l'espace des mesures sur $\Q_p$
``nulles \`a l'infini'' (c'est aussi le $\O_L$-dual de l'espace des fonctions uniform\'ement
continues sur $\Q_p$, \`a valeurs dans $\O_L$) et
${\cal C}(\Q_p, \O_L)_0$ l'espace des fonctions continues sur $\Q_p$ tendant vers $0$ \`a l'infini.
On munit ces espaces d'actions de $\PP$
en posant
$$\int_{\Q_p}\phi(x)\,\matrice{a}{b}{0}{1}\mu:=\int_{\Q_p}\phi(ax+b)\,\mu,
\quad \matrice{a}{b}{0}{1}\phi(x)=\phi\big(\tfrac{x-b}{a}\big)$$
On a alors le r\'esultat suivant qui \'etend le dictionnaire de la prop.\,\ref{ext4} \`a $\Q_p$.
\begin{prop}\label{ext5}
{\rm (i)}
La transform\'ee de Fourier $\mu\mapsto \int_{\Q_p}[\epsilon^x]\mu$ 
induit un isomorphisme $\PP$-\'equivariant
$$\cdo(\Q_p, \O_L)_{\rm pc}\overset{\sim}{\to}\woe^+$$

{\rm (ii)} La transform\'ee $z\mapsto\phi_z$, o\`u $\phi_z(x):=\reso([\epsilon^{-x}]z\tfrac{d\pi}{1+\pi})$
et $\reso$ est l'unique extension continue de $\reso$ \`a $\woe\frac{d\pi}{1+\pi}$ v\'erifiant 
$\reso(\varphi(z)\frac{d\pi}{1+\pi})=\reso(z\frac{d\pi}{1+\pi})$,
induit un isomorphisme $\PP$-\'equivariant:
$$\widetilde{\O}_{\cal E}^{-}\cong {\cal C}(\Q_p, \O_L)_0\otimes\chi^{-1}$$
\end{prop}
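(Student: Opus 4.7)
\medskip

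\noindent\textbf{Plan.} The strategy is to bootstrap the $\Z_p$-dictionary of Proposition~\ref{ext4} along the parallel filtrations $\Q_p=\bigcup_{k\ge 0} p^{-k}\Z_p$ and $\woe^{\pm}=$ closure of $\bigcup_{k\ge 0}\varphi^{-k}(\oe^{\pm})$, using that conjugation by $\matrice{p^{-k}}{0}{0}{1}$ intertwines the action of $\PP^+$ on $\Z_p$ with its action on $p^{-k}\Z_p$, while multiplication by $[\epsilon^{p^{-k}y}]$ is intertwined with $\varphi^{-k}$ of multiplication by $[\epsilon^y]$ via the identity $[\epsilon^{p^{-k}y}]=\varphi^{-k}([\epsilon^y])$.

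For (i), given a measure $\mu\in\cdo(\Q_p,\O_L)_{\rm pc}$ supported on $p^{-k}\Z_p$, the substitution $x=p^{-k}y$ produces a measure $\mu_k$ on $\Z_p$, and a direct computation gives
$$\int_{\Q_p}[\epsilon^x]\,\mu \;=\; \varphi^{-k}(A_{\mu_k}(\pi)) \;\in\;\varphi^{-k}(\oe^+).$$
By Proposition~\ref{ext4}\,(i) this is a bijection between measures supported in $p^{-k}\Z_p$ and $\varphi^{-k}(\oe^+)$. Passing to the $p$-adic completion of compactly supported measures on the left corresponds exactly to taking the closure $\woe^+$ on the right, giving the isomorphism. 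The $\PP$-equivariance needs only be checked on translations $\matrice{1}{b}{0}{1}$ (which correspond to multiplication by $[\epsilon^b]\in\widetilde{\A}^+$) and on the scalings $\matrice{p^k a}{0}{0}{1}$ (which correspond to $\varphi^k\sigma_a$ by the very definition of the $\PP$-action on $\woe$), both immediate.

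For (ii), I would define $\phi_z(x)=\reso([\epsilon^{-x}]z\tfrac{d\pi}{1+\pi})$ and first establish the key identity
$$\phi_{\varphi^{-k}(w)}(p^{-k}y)=\phi_w(y),\qquad w\in\oe,\ y\in\Z_p.$$
This is obtained by writing $[\epsilon^{-p^{-k}y}]\varphi^{-k}(w)=\varphi^{-k}([\epsilon^{-y}]w)$, pulling $\varphi^{-k}$ outside the residue, and iterating the $\varphi$-invariance $\reso(\varphi(u)\tfrac{d\pi}{1+\pi})=\reso(u\tfrac{d\pi}{1+\pi})$. This identity shows that the restriction of $\phi_z$ to $p^{-k}\Z_p$ coincides, under the scaling $x\mapsto p^{-k}y$, with the $\Z_p$-function attached by Proposition~\ref{ext4}\,(ii) to any $w\in\oe$ with $z\equiv\varphi^{-k}(w)$ modulo $\woe^+$. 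Three consequences follow: $\phi_z$ is continuous on $\Q_p$; the map $z\mapsto\phi_z$ kills $\woe^+$ (since Proposition~\ref{ext4}\,(ii) kills $\oe^+$) and hence factors through $\woe^-$; and $\phi_z$ vanishes at $\infty$ because $z\in\woe$ is a $p$-adic limit of elements of $\varphi^{-k}(\oe)$, with the required decay coming from the control on valuations in $\woe$. Conversely, given $\phi\in\cal C(\Q_p,\O_L)_0$, the restrictions $\phi|_{p^{-k}\Z_p}$ produce, via Proposition~\ref{ext4}\,(ii), a compatible system of classes $w_k\in\oe/\oe^+$ assembling into a unique class in $\woe^-$. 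The $\PP$-equivariance is then a direct check from the formulas defining both actions.

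The main obstacle is the analytic one hidden inside the statement: proving that $\reso$ admits a unique continuous extension to $\woe\tfrac{d\pi}{1+\pi}$ that satisfies $\reso(\varphi(z)\tfrac{d\pi}{1+\pi})=\reso(z\tfrac{d\pi}{1+\pi})$. This is what legitimizes the manipulation $\phi_{\varphi^{-k}(w)}(p^{-k}y)=\phi_w(y)$ and hence the whole reduction; without it, only the integer-translate case of Proposition~\ref{ext4} is available. A secondary point, but one that must be addressed rather than waved away, is matching the topologies on both sides: the ``mesures nulles à l'infini'' topology with the $p$-adic topology on $\woe^+$ in (i), and the uniform topology on $\cal C(\Q_p,\O_L)_0$ with the quotient topology on $\woe^-$ in (ii). Once these two foundational points are secured, everything else is a bookkeeping consequence of Proposition~\ref{ext4}.
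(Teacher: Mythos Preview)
The paper does not actually prove this proposition: it is stated as a known extension of the $\Z_p$-dictionary, with the proof deferred to \cite[\S\,IV.3]{mira}. So there is no in-paper argument to compare against.

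Your outline is the standard bootstrapping argument and is essentially what one finds in the cited reference: identify $\cdo(\Q_p,\O_L)_{\rm pc}$ (resp.\ ${\cal C}(\Q_p,\O_L)_0$) as the completed direct limit of the $\cdo(p^{-k}\Z_p,\O_L)$ (resp.\ ${\cal C}(p^{-k}\Z_p,\O_L)$), match these with $\varphi^{-k}(\oe^{\pm})$ via Proposition~\ref{ext4}, and pass to the limit. Your identification of the two genuine analytic inputs --- the existence/uniqueness of the $\varphi$-invariant extension of $\reso$ to $\woe\tfrac{d\pi}{1+\pi}$, and the matching of topologies under completion --- is exactly right; these are the substance of \cite[\S\,IV.3]{mira} and are not reproved in the present paper. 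One small sharpening: in (ii) the decay at infinity of $\phi_z$ is most cleanly seen by writing $z\in\woe$ as a $p$-adically convergent sum $z=\sum_k p^{n_k}\varphi^{-k}(w_k)$ with $w_k\in\oe$ and $n_k\to\infty$, so that $\phi_z\equiv 0\pmod{p^{n_k}}$ outside $p^{-k}\Z_p$; this is a bit more concrete than ``control on valuations in $\woe$''.
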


\Subsection{Extension du dictionnaire d'analyse fonctionnelle $p$-adique}
La prop.\,\ref{ext5} fournit une interpr\'etation de $\woe^-$ en termes de fonctions
continues sur $\Q_p$ avec un comportement sp\'ecial \`a l'infini; nous allons
donner une telle interpr\'etation pour $(\O_L\point\tA^-)^{H_{\Q_p}}$ qui contient $\woe^-$.
Dans ce paragraphe, on fabrique un plongement naturel dans les fonctions continues sur $\Q_p$,
et au chapitre~\ref{chap4}, on d\'ecrit l'image de ce plongement.

\subsubsection{D\'evissage de $(\O_L\point\tA^-)^{H_{\Q_p}}$}
On pose
$$\breve{\O}_L:=\O_L\point W(\overline{\bf F}_p),\quad \breve{L}:=\breve{\O}_L[\tfrac{1}{p}]$$
Ces anneaux sont munis d'actions $\O_L$-lin\'eaires de $\varphi$ et $G_{\Q_p}$ qui commutent.
 
 \begin{lemm}\label{ext6}
 {\rm (i)} $H^1({H_{\Q_p}}, \O_L\point\widetilde{\A})=0$ et $H^1({H_{\Q_p}}, \O_L\point\widetilde{\A}^{++})=0$.

{\rm (ii)} On a une suite exacte naturelle 
\[0\rightarrow \woe^-\rightarrow (\O_L\point\tA^-)^{H_{\Q_p}}\rightarrow H^1({H_{\Q_p}}, \breve{\O}_L)\rightarrow 0.\]
\end{lemm}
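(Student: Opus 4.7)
The plan is to deduce (ii) formally from (i) by taking the long exact $H_{\Q_p}$-cohomology sequence, so the essential content lies in (i).

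For (i), since $\O_L$ is free of finite rank over $\Z_p$, it suffices to show $H^1(H_{\Q_p},\tA)=0$ and $H^1(H_{\Q_p},\tA^{++})=0$. The strategy is to reduce modulo $p$ via the short exact sequences
\[
0\to\tA\overset{p}{\to}\tA\to\tE\to 0,\qquad 0\to\tA^{++}\overset{p}{\to}\tA^{++}\to\tE^{++}\to 0
\]
(noting $\tA/p=\tE$ and $\tA^{++}/p=\tE^{++}$, which hold because $\tE^+$ is perfect, via the snake lemma), combined with the $p$-adic completeness and separatedness of $\tA$ and $\tA^{++}$ and the obvious Mittag-Leffler condition for the corresponding towers of finite-length Witt vectors. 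This reduces the problem to the mod $p$ vanishings $H^1(H_{\Q_p},\tE)=0$ and $H^1(H_{\Q_p},\tE^{++})=0$, both standard consequences of Tate-Sen-Fontaine-Wintenberger theory for the cyclotomic tower: the first follows from Hilbert 90 applied through the Fontaine-Wintenberger identification of $H_{\Q_p}$ with the absolute Galois group of the field of norms, and the second from the Tate-Sen normalized-trace argument that kills the cohomology of the positive ideal.

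For (ii), apply $H_{\Q_p}$-cohomology to
\[
0\to\O_L\point\tA^+\to\O_L\point\tA\to\O_L\point\tA^-\to 0.
\]
The identifications $(\O_L\point\tA)^{H_{\Q_p}}=\woe$ and $(\O_L\point\tA^+)^{H_{\Q_p}}=\woe^+$ recorded earlier imply that the image of $(\O_L\point\tA)^{H_{\Q_p}}$ in $(\O_L\point\tA^-)^{H_{\Q_p}}$ is exactly $\woe/\woe^+=\woe^-$; combining with $H^1(H_{\Q_p},\O_L\point\tA)=0$ from (i) yields
\[
0\to\woe^-\to(\O_L\point\tA^-)^{H_{\Q_p}}\to H^1(H_{\Q_p},\O_L\point\tA^+)\to 0.
\]
To rewrite the right-hand term, apply $W(-)$ to the split sequence $0\to\tE^{++}\to\tE^+\to\overline{\bf F}_p\to 0$ (with its canonical section) to obtain the $H_{\Q_p}$-equivariant ring decomposition $\tA^+=W(\overline{\bf F}_p)\oplus\tA^{++}$. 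After $\O_L$-extension of scalars this becomes $\O_L\point\tA^+=\breve{\O}_L\oplus\O_L\point\tA^{++}$, and by (i) the second summand has vanishing $H^1$. Hence $H^1(H_{\Q_p},\O_L\point\tA^+)\cong H^1(H_{\Q_p},\breve{\O}_L)$, completing the proof.

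The main obstacle is (i): it encapsulates the only nontrivial $p$-adic Hodge theoretic input (almost étale cohomology for the cyclotomic tower), whereas (ii) is then a pure diagram chase using the splitting of $\tA^+$ into $W(\overline{\bf F}_p)$ and its maximal ideal.
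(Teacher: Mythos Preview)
Your proposal is correct and follows essentially the same route as the paper: part (i) is declared classical (``descente presque \'etale'') there, and you simply unpack this via d\'evissage to characteristic~$p$; part (ii) is deduced exactly as in the paper from the long exact $H_{\Q_p}$-cohomology sequence of $0\to\O_L\point\tA^+\to\O_L\point\tA\to\O_L\point\tA^-\to 0$ together with the decomposition $\O_L\point\tA^+=\breve{\O}_L\oplus\O_L\point\tA^{++}$. One cosmetic point: the splitting $\tA^+=W(\overline{\bf F}_p)\oplus\tA^{++}$ is a direct sum of $H_{\Q_p}$-modules (a subring plus an ideal), not a product of rings, but this is irrelevant for the cohomological argument.
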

\begin{proof}
Le {\rm (i)} est classique (descente presque \'etale).
 Le {\rm (ii)} s'en d\'eduit en utilisant la suite exacte longue de cohomologie associ\'ee \`a la suite exacte 
\[0\rightarrow \O_L\point\tA^+\rightarrow \O_L\point\tA\rightarrow \O_L\point\tA^{-}\rightarrow 0,\]
 le {\rm (i)} et la decomposition $\O_L\point\tA^+=\breve\O_L\oplus \O_L\point\tA^{++}$. 
\end{proof}

\subsubsection{Une injection dans les fonctions continues}
Soit $\UU:=\matrice{1}{\Q_p}{0}{1}$.

\begin{lemm}\label{ext7}
 $H^0(\UU, {\cal C}(\Q_p, \O_L))=\O_L$ et $H^1(\UU, {\cal C}(\Q_p, \O_L))=0$.  
\end{lemm}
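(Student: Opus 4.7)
The plan is to treat the two claims separately. The group $\UU$ is the upper unipotent of $\GG$, which we identify with $(\Q_p,+)$; it acts on $\Q_p$ by translations, so $(b\cdot\phi)(x)=\phi(x-b)$ on ${\cal C}(\Q_p,\O_L)$.

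For the $H^0$ part, a $\UU$-invariant function satisfies $\phi(x-b)=\phi(x)$ for all $b,x\in\Q_p$; the specialisation $b=x$ forces $\phi(x)=\phi(0)$, so $\phi$ is a constant, and constants are obviously invariant. Hence $H^0(\UU,{\cal C}(\Q_p,\O_L))=\O_L$.

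For the vanishing of $H^1$, I will write down an explicit primitive for every continuous $1$-cocycle $c:\Q_p\to{\cal C}(\Q_p,\O_L)$. Such a $c$ satisfies
$$c(b_1+b_2)(x)=c(b_1)(x)+c(b_2)(x-b_1).$$
The key idea is to set $\phi(x):=-c(x)(x)$. Applying the cocycle relation with $b_1=b$ and $b_2=x-b$ gives
$$c(x)(x)=c(b)(x)+c(x-b)(x-b),$$
i.e.\ $c(b)(x)=c(x)(x)-c(x-b)(x-b)=\phi(x-b)-\phi(x)$, so that $c$ is the coboundary of $\phi$ as soon as $\phi$ actually lies in ${\cal C}(\Q_p,\O_L)$.

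The only real check is the continuity of $\phi$. A continuous map $c:\Q_p\to{\cal C}(\Q_p,\O_L)$ (with the topology of uniform convergence on compacts) produces, by local compactness of $\Q_p$, a jointly continuous evaluation $(b,x)\mapsto c(b)(x)$; restricting to the diagonal $x=b$ shows $\phi$ is continuous and $\O_L$-valued. I expect this topological verification to be the only delicate step: once the correct continuous-cohomology formalism used in the paper is fixed, the explicit formula above settles the lemma.
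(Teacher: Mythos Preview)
Your proof is correct. The algebra of the cocycle computation is fine, and the joint continuity of $(b,x)\mapsto c(b)(x)$ follows exactly as you indicate from continuity of $c$ into ${\cal C}(\Q_p,\O_L)$ with the compact-open topology, since the second variable stays in a compact set as $(b,x)\to(b_0,x_0)$.

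The paper's argument is different and shorter: it simply observes that $\UU\cong\Q_p$ acts simply transitively on $\Q_p$, so ${\cal C}(\Q_p,\O_L)$ is the (continuous) induced representation ${\rm Ind}_{\{1\}}^{\UU}\O_L$, and then invokes Shapiro's lemma to get $H^i(\UU,{\cal C}(\Q_p,\O_L))\cong H^i(\{1\},\O_L)$, which is $\O_L$ for $i=0$ and $0$ for $i\geq 1$. Your explicit primitive $\phi(x)=-c(x)(x)$ is in fact the Shapiro isomorphism unwound on cochains: the ``evaluation at $0$'' section ${\cal C}(\Q_p,\O_L)\to\O_L$ induces the Shapiro map, and tracing it through on $1$-cocycles produces exactly your formula. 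So the two proofs are the conceptual and the hands-on versions of the same idea. The paper's route is cleaner as a one-liner; yours has the advantage of making the primitive explicit, which is useful later in the paper (\S\,4.2.2) when the authors actually compute the function $\phi_{\tilde c}$ attached to specific cocycles.
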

\begin{proof}
 L'espace ${\cal C}(\Q_p, \O_L)$, vu comme repr\'esentation de $\UU\cong \Q_p$, 
est l'induite de $1$ \`a $\UU$ de la repr\'esentation triviale;
 le r\'esultat s'en d\'eduit via le lemme de Shapiro. 

Plus explicitement, le r\'esultat pour $H^0$ est imm\'ediat (une fonction sur $\Q_p$ invariante
par translation par $\Q_p$ est constante).  Pour $H^1$, notons $U_N$ le sous-groupe
$\matrice{1}{p^{-N}\Z_p}{0}{1}$ de $\UU$. Commen\c{c}ons par remarquer que
$H^1(U_0,{\cal C}(\Z_p,\O_L))=0$ (cela revient \`a v\'erifier que toute fonction continue $\phi$ sur $\Z_p$
peut s'\'ecrire sous la forme $\tilde\phi(x)-\tilde\phi(x-1)$, avec $\tilde\phi$ continue,
 ce qui est imm\'ediat sur le d\'eveloppement de Mahler en termes de polyn\^omes binomiaux).
Comme ${\cal C}(\Q_p,\O_L)\cong {\cal C}(\Z_p,\O_L)^{\Q_p/\Z_p}$ comme $U_0$-module,
on a aussi $H^1(U_0,{\cal C}(\Q_p,\O_L))=0$ et, par suite, $H^1(U_N,{\cal C}(\Q_p,\O_L))=0$
pour tout $N\geq 0$.  

Soit alors $b\mapsto\phi_b$ un $1$-cocycle sur $\Q_p$ \`a valeurs
dans ${\cal C}(\Q_p,\O_L)$. Il r\'esulte de ce qui pr\'ec\`ede que, pour tout $N\geq 0$, il existe
$\phi^N$ v\'erifiant $(b-1)\cdot \phi^N=\phi_b$, pour tout $b\in U_N$, et on peut imposer
que $\phi^N(0)=0$ auquel cas, $\phi^N$ est uniquement d\'etermin\'ee sur $p^{-N}\Z_p$
(et \`a constante pr\`es sur chaque $a+p^{-N}\Z_p$).  Cette unicit\'e fait que $\phi^{N+1}=\phi^N$
sur $p^{-N}\Z_p$, et donc que $\phi=\lim_N\phi^N$ existe. On a alors $(b-1)\cdot\phi=\phi_b$
pour tout $b\in \UU$ puisque c'est vrai en restriction \`a $p^{-N}\Z_p$, pour tout $N$ et
tout $b\in U_N$.  Notre cocycle est donc un cobord, ce qui prouve la nullit\'e du $H^1$.
\end{proof}

\begin{prop}\label{ext8}
Il existe un unique plongement ${\bb P}$-\'equivariant
\[\iota:(\O_L\point\tA^-)^{H_{\Q_p}}\hookrightarrow ({\cal C}(\Q_p, \O_L)/{\{{\rm Ctes}\}})\otimes \chi^{-1}\]
rendant commutatif le diagramme
\[\xymatrix@R=4mm@C=5mm{ 
\woe^-\ar[r]\ar[d]& (\O_L\point\tA^-)^{H_{\Q_p}}\ar[d] \\
{\cal C}(\Q_p, \O_L)_0\otimes\chi^{-1}\ar[r] &({\cal C}(\Q_p, \O_L)/{\{{\rm Ctes}\}})\otimes\chi^{-1} }.\]
\end{prop}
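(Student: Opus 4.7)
\emph{Uniqueness.} The difference $\delta$ of two $\PP$-equivariant extensions vanishes on $\woe^-$, and by lemma~\ref{ext6}(ii) factors through $H^1(H_{\Q_p},\breve{\O}_L)$. Since $\epsilon-1 \in \tE^{++}$, one has $[\epsilon^b]\equiv 1\pmod{\O_L\point\tA^{++}}$, so the $\UU$-action on the quotient $\O_L\point\tA^+/\O_L\point\tA^{++}=\breve{\O}_L$, and hence on $H^1(H_{\Q_p},\breve{\O}_L)$, is trivial. The image of $\delta$ therefore lies in $({\cal C}(\Q_p,\O_L)/\{{\rm Ctes}\})^\UU$ (the twist $\chi^{-1}$ is trivial on $\UU$). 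Using lemma~\ref{ext7} and the fact that any continuous group homomorphism $\Q_p\to\O_L$ is zero (divisibility of $\Q_p$ forces the image into $\bigcap_n p^n\O_L=0$), the long exact $\UU$-cohomology sequence attached to $0\to\{{\rm Ctes}\}\to{\cal C}(\Q_p,\O_L)\to{\cal C}(\Q_p,\O_L)/\{{\rm Ctes}\}\to 0$ gives $({\cal C}/\{{\rm Ctes}\})^\UU=0$. Hence $\delta=0$.

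\emph{Existence.} I would identify ${\cal C}(\Q_p,\O_L)\otimes\chi^{-1}$ with $\operatorname{Ind}_T^\PP(\chi^{-1}|_T)$, where $T=\matrice{\Q_p^\dual}{0}{0}{1}$ and $\PP/T\cong\Q_p$ sends $\matrice{1}{b}{0}{1}T$ to $b$. By Frobenius reciprocity, $\PP$-equivariant maps $\iota$ into this induced representation correspond bijectively to $T$-equivariant $\O_L$-linear functionals $\lambda:(\O_L\point\tA^-)^{H_{\Q_p}}\to\O_L(\chi^{-1}|_T)$, via the formula $\iota(\bar z)(b)=\lambda(\matrice{1}{-b}{0}{1}\bar z)$. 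On $\woe^-$, the residue functional $\lambda_0(\bar z):=\reso(z\,\tfrac{d\pi}{1+\pi})$ of prop.~\ref{ext5} is $T$-equivariant---the $\varphi$-invariance of $\reso$ handles the $p^{\Z}$ part, and the identity $\sigma_u(\tfrac{d\pi}{1+\pi})=u\cdot\tfrac{d\pi}{1+\pi}$ handles the $\Z_p^\dual$ part---and recovers the isomorphism of prop.~\ref{ext5}. I extend $\lambda_0$ to a $T$-equivariant $\O_L$-linear $\lambda$ on all of $(\O_L\point\tA^-)^{H_{\Q_p}}$; two such extensions differ by an element of $\operatorname{Hom}_T(H^1(H_{\Q_p},\breve{\O}_L),\O_L(\chi^{-1}|_T))$, and since $\UU$ acts trivially on $H^1$, this difference changes $\iota(\bar z)$ only by a constant in $b$, hence vanishes modulo $\{{\rm Ctes}\}$. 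Thus the map in the quotient is canonical. Continuity of $b\mapsto\iota(\bar z)(b)$ is immediate from continuity of $b\mapsto[\epsilon^{-b}]$ in $\O_L\point\tA$, and commutativity of the diagram on $\woe^-$ is by construction.

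\emph{Main obstacle---injectivity.} Restricted to $\woe^-$, $\iota$ is the isomorphism of prop.~\ref{ext5} onto ${\cal C}(\Q_p,\O_L)_0\otimes\chi^{-1}$, which meets $\{{\rm Ctes}\}\otimes\chi^{-1}$ trivially, so $\ker(\iota)\cap\woe^-=0$. The essential remaining point is that the induced map $H^1(H_{\Q_p},\breve{\O}_L)\to{\cal C}(\Q_p,\O_L)/{\cal C}(\PP^1,\O_L)$---the right column of the diagram in theorem~\ref{ext1}(ii)---is injective. My plan would be to take a nonzero $h\in H^1$ represented by a cocycle $c:H_{\Q_p}\to\breve{\O}_L$, lift to $\bar z\in(\O_L\point\tA^-)^{H_{\Q_p}}$, and show that the function $b\mapsto\lambda(\matrice{1}{-b}{0}{1}\bar z)$ has no continuous extension to $\infty\in\PP^1$; the ``jump at infinity'' should recover the class of $c$, so that $c\neq 0$ prevents $\iota(\bar z)$ from lying in ${\cal C}(\PP^1,\O_L)$. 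This is the main technical obstacle and is precisely the content that dovetails with the identification in chap.~\ref{chap4} of the image of $H^1$ with continuous functions on $\widehat{\Q_p^\dual}$.
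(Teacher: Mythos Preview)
Your uniqueness argument is fine and matches the paper's. The problems are in existence and injectivity.

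\emph{Existence.} You assert, without justification, that the $T$-equivariant functional $\lambda_0$ on $\woe^-$ extends to a $T$-equivariant functional $\lambda$ on all of $(\O_L\point\tA^-)^{H_{\Q_p}}$. This is the entire content of the construction, and you give no argument for it. The paper proceeds differently and avoids the issue: since $\UU$ acts trivially on $H^1(H_{\Q_p},\breve\O_L)$ (as you yourself observe), for any $v\in(\O_L\point\tA^-)^{H_{\Q_p}}$ the element $v_b:=\big(\matrice{1}{b}{0}{1}-1\big)v$ already lies in $\woe^-$. Thus $b\mapsto\phi_{v_b}$ is a $1$-cocycle on $\UU$ with values in ${\cal C}(\Q_p,\O_L)_0\otimes\chi^{-1}$, which by lemma~\ref{ext7} is the coboundary of a unique $\phi\in({\cal C}(\Q_p,\O_L)/\{{\rm Ctes}\})\otimes\chi^{-1}$; one sets $\iota(v):=\phi$. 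The $\PP$-equivariance then follows from the uniqueness of the $\UU$-equivariant extension: for $g\in\PP$, the map $g^{-1}\iota g$ is again $\UU$-equivariant (since $g\UU g^{-1}=\UU$) and agrees with $\iota$ on $\woe^-$, hence equals $\iota$. Your Frobenius-reciprocity packaging can be repaired --- take \emph{any} $\O_L$-linear extension $\lambda$ and observe that the defect $\lambda(a\cdot w)-\chi^{-1}(a)\lambda(w)$ depends only on the class of $w$ in $H^1$ (because $\lambda=\lambda_0$ is $T$-equivariant on $\woe^-$), hence is constant in $b$ when $w=[\epsilon^{-b/a}]\bar z$ --- but you did not give this argument.

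\emph{Injectivity.} You call the injectivity of $H^1(H_{\Q_p},\breve\O_L)\to{\cal C}(\Q_p,\O_L)/{\cal C}(\piqp,\O_L)$ the ``main technical obstacle'' and defer to chapter~\ref{chap4}. This is a forward reference to a strictly harder statement and is unnecessary here. Once you know $\ker(\iota)\cap\woe^-=0$ (which you have), injectivity is immediate: $\ker\iota$ is $\UU$-stable, so for $\bar z\in\ker\iota$ one has $([\epsilon^b]-1)\bar z\in\ker\iota\cap\woe^-=0$ for all $b\in\Q_p$, i.e.\ $\bar z$ is $\UU$-invariant; but $((\O_L\point\tA^-)^{H_{\Q_p}})^\UU=0$ since, modulo~$p$, any $z\in\tE$ with $(\epsilon^{p^{-n}}-1)z\in\tE^+$ for all $n$ satisfies $v_\E(z)\geq 0$ (because $v_\E(\epsilon^{p^{-n}}-1)\to 0$). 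This is the easy argument implicit in the paper's use of the word ``plongement''.
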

\begin{proof}
On part de la suite exacte
\[0\rightarrow \woe^-\rightarrow(\O_L\point\tA^-)^{H_{\Q_p}}\rightarrow H^1({H_{\Q_p}}, \O_L\point\tA^+)\rightarrow 0.\]
Par ailleurs, $H^1({H_{\Q_p}}, \O_L\point\tA^{++})=0$, et donc $H^1({H_{\Q_p}}, \O_L\point\tA^+)$ 
est tu\'e par $\matrice{1}{b}{0}{1}-1$ pour tout $b\in \Q_p$.
 Il s'ensuit que, si $v\in (\O_L\point\tA^{-})^{H_{\Q_p}}$, 
alors $v_b:=\big(\matrice{1}{b}{0}{1}-1\big)\cdot v\in \woe^-$.
 Posant $\phi_b=\phi_{v_b}$, on obtient de la sorte un $1$-cocycle sur $\UU$ \`a valeurs dans ${\cal C}(\Q_p, \O_L)_0\otimes\chi^{-1}$.
 On d\'eduit du lemme pr\'ec\'edent que ce $1$-cocycle se trivialise dans ${\cal C}(\Q_p, \O_L)\otimes \chi^{-1}$, de mani\`ere unique modulo les constantes.
Autrement dit,
il existe un unique $\phi\in ({\cal C}(\Q_p, \O_L)/{\{{\rm Ctes}\}})\otimes \chi^{-1}$ 
tel que $\phi_b= ((\begin{smallmatrix}1& b\\ 0 & 1\end{smallmatrix})-1)\cdot \phi$ pour tout $b$, 
et le plongement 
$(\O_L\point\tA^{-})^{H_{\Q_p}}\hookrightarrow ({\cal C}(\Q_p, \O_L)/{\{{\rm Ctes}\}})\otimes \chi^{-1}$ 
est donn\'e par $v\mapsto \phi$ (la fl\`eche
$v\mapsto \phi$ est injective: si $\phi=0$, on a $v_b=0$ pour tout $b$; on en d\'eduit
que, si $\hat v\in \O_L\point\tA$ est un rel\`evement de $v$, alors $([\epsilon^b]-1)\hat v\in \O_L\point\tA^+$,
pour tout $b$, et donc $\hat v\in \O_L\point\tA^+$ (lemme~\ref{truc} ci-dessous) et $v=0$). 

Il r\'esulte de ce qui pr\'ec\`ede que $\iota$ a un unique prolongement $\UU$-\'equivariant
\`a $(\O_L\point\tA^-)^{H_{\Q_p}}$. Cette unicit\'e implique que ce prolongement est $\PP$-\'equivariant:
si $g\in\PP$ et si $u\in\UU$, alors $u^{-1}g^{-1}\iota gu=g^{-1}(gug^{-1})^{-1}\iota(gug^{-1})g=
g^{-1}\iota g$ puisque $gug^{-1}\in\UU$;
 il s'ensuit que $g\iota g^{-1}$ est $\UU$-\'equivariant, et
comme il co\"{\i}ncide avec $\iota$ sur $\woe^-$ (puisque $\iota$ est $\PP$ \'equivariant
sur $\woe^-$), on a $g\iota g^{-1}=\iota$, ce qui permet de conclure.
\end{proof}

\begin{lemm}\label{truc}
Si $x\in \O_L\point\tA$ v\'erifie $([\epsilon^b]-1)x\in \O_L\point\tA^+$,
pour tout $b\in\Q_p$, alors $x\in \O_L\point\tA^+$.
\end{lemm}
\begin{proof}
En d\'ecomposant tout sur une base de $\O_L$ sur $\Z_p$, on se ram\`ene \`a prouver
que, si $x\in\tA$ et si $([\epsilon^b]-1)x\in \tA^+$ pour tout $b\in\Q_p$, alors $x\in \tA^+$.
Pour cela, on \'ecrit $x=\sum_{n\geq 0}p^n[x_n]$, avec $x_n\in\tE$, et on doit prouver
que $x_n\in\tE^+$.  En r\'eduisant modulo~$p$, on obtient $v_\E((\epsilon^b-1)x_0)\geq 0$ pour tout $b$,
et donc (en prenant $b=p^{-k}$), $v_\E(x_0)\geq -\frac{1}{(p-1)p^{k-1}}$ pour tout $k$,
et donc $v_\E(x_0)\geq 0$, i.e.~$x_0\in \tE^+$.

Posons $y_1=\frac{1}{p}(x-p[x_0])=\sum_{n\geq 0}p^n[x_{n+1}]$. 
On a $([\epsilon^b]-1)y_1\in\tA\cap\frac{1}{p}\tA^+= \tA^+$, pour tout $b$, et donc $x_1\in\tE^+$.
Une r\'ecurrence imm\'ediate permet de conclure.
\end{proof}

\subsubsection{Un diagramme commutatif}
Soit ${\bf P}^1=\Q_p\cup\{\infty\}$.
On a une d\'ecomposition $\PP$-\'equivariante
$${\cal C}(\piqp,\O_L)={\cal C}(\Q_p,\O_L)_0\oplus \O_L{\bf 1}_{\piqp},\quad
\phi\mapsto (\phi-\phi(\infty))+\phi(\infty){\bf 1}_{\piqp}$$
o\`u ${\bf 1}_{\piqp}$ est la fonction constante sur $\piqp$, de valeur $1$.
On en d\'eduit une suite exacte $\PP$-\'equivariante
\[
\xymatrix@C=4mm{
0\ar[r]& {\cal C}(\Q_p,\O_L)_0\ar[r] & \frac{{\cal C}(\Q_p, \O_L)}{\{{\rm Ctes}\}}\ar[r] 
&  \frac{ {\cal C}(\Q_p, \O_L)}{{\cal C}(\piqp,\O_L)_0}\ar[r]& 0 \\
 }.
\]
D'o\`u un diagramme ${\bb P}$-\'equivariant
\begin{equation}\label{diagfond}
\xymatrix@R=4mm@C=5mm{0 \ar[r] & \widetilde{\O}_{\cal E}^{-} \ar[r]\ar[d]^-{\wr}
& (\O_L\point\tA^-)^{H_{\Q_p}}\ar[r] \ar@{^{(}->}[d]
& H^1({H_{\Q_p}}, \breve{\O}_L)\ar[r]\ar@{^{(}->}[d] & 0 \\
0\ar[r]& {\cal C}(\Q_p, \O_L)_0\otimes\chi^{-1}\ar[r] 
& \frac{{\cal C}(\Q_p, \O_L)}{\{{\rm Ctes}\}}\otimes\chi^{-1}\ar[r] 
& \frac{{\cal C}(\Q_p, \O_L)}{{\cal C}({\bf P}^1, \O_L)}\otimes\chi^{-1}\ar[r] & 0
}
\end{equation}
Nous allons d\'eterminer l'image de $H^1({H_{\Q_p}}, \breve{\O}_L)\to
\frac{{\cal C}(\Q_p, \O_L)}{{\cal C}({\bf P}^1, \O_L)}\otimes\chi^{-1}$,
ce qui fournira une description de celle de $(\O_L\point\tA^-)^{H_{\Q_p}}$.

\section{Le $\PP$-module $H^1({H_{\Q_p}}, \breve{\O}_L)$}\label{chap3}
\Subsection{Relation avec $H^1(H_{\Q_p}',\O_L)$}
On a 
$$G_{\Q_p}/H_{\Q_p}'=(G_{\Q_p}/{H_{\Q_p}})\times({H_{\Q_p}}/H_{\Q_p}'),\quad
G_{\Q_p}/{H_{\Q_p}}\cong\Z_p^\dual,\quad {H_{\Q_p}}/H_{\Q_p}'=\sigma_p^{\cZ}$$
Le groupe $\sigma_p^{\cZ}\times\Z_p^\dual\times\varphi^\Z$ agit sur:

$\bullet$ $\breve{\O}_L$ \`a travers $\sigma_p^{\cZ}\times\varphi^\Z$ (et les actions
de $\sigma_p$ et $\varphi$ co\"{\i}ncident; en particulier, l'action de $\varphi^\Z$
s'\'etend par continuit\'e en une action de $\varphi^{\cZ}$).

$\bullet$ $H^1({H_{\Q_p}}, \breve{\O}_L)$ \`a travers $\Z_p^\dual\times\varphi^\Z$,
o\`u $\varphi$ agit sur $\breve{\O}_L$ et $\Z_p^\dual$ agit comme
$G_{\Q_p}/{H_{\Q_p}}$ ($\breve{\O}_L$ est muni d'une action de $G_{\Q_p}$).

$\bullet$ $H^1(H_{\Q_p}', \O_L)$ \`a travers $\sigma_p^{\cZ}\times\Z_p^\dual=G_{\Q_p}/H_{\Q_p}'$.

On identifie le sous-groupe $\{1\}\times\Z_p^\dual\times \varphi^\Z$ \`a $\Q_p^\dual$
en envoyant $\varphi$ sur $p$, et le sous-groupe
$\sigma_p^{\widehat{\Z}}\times\Z_p^\dual\times\{1\}$ \`a $\widehat{\Q_p^\dual}$ en envoyant
$\sigma_p$ sur $p^{-1}$ (on obtient ainsi l'identification $G_{\Q_p}^{\rm ab}=\widehat{\Q_p^\dual}$
de la th\'eorie locale du corps de classe).

\begin{lemm}\label{iso1}
 {\rm (i)} La surjection $(\O_L\point\widetilde{\A}^{-})^{H_{\Q_p}}\rightarrow H^1({H_{\Q_p}}, \breve{\O}_L)$ 
est $\Q_p^{\dual}$-\'equivariante. 

{\rm (ii)} On a un isomorphisme naturel de $\sigma_p^{\cZ}\times\Z_p^\dual\times\varphi^\Z$-modules
\[H^1({H_{\Q_p}}, \breve{\O}_L)\widehat{\otimes}_{\O_L} \breve{\O}_L
\cong H^1(H_{\Q_p}', \O_L)\widehat{\otimes}_{\O_L}\breve{\O}_L. \]
En particulier,
\begin{align*}
H^1(H'_{\Q_p},\O_L)&\cong (H^1({H_{\Q_p}}, \breve{\O}_L)\widehat{\otimes}_{\O_L} \breve{\O}_L)^{\varphi=1}\\
H^1({H_{\Q_p}}, \breve{\O}_L)
&\cong (H^1(H_{\Q_p}', \O_L)\widehat{\otimes}_{\O_L}\breve{\O}_L)^{\sigma_p^{\cZ}}
\end{align*}
\end{lemm}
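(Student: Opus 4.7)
For part (i), I would obtain the surjection $(\O_L\point\widetilde{\A}^{-})^{H_{\Q_p}}\to H^1(H_{\Q_p}, \breve{\O}_L)$ explicitly as the connecting map in the long exact $H_{\Q_p}$-cohomology sequence of $0\to\O_L\point\tA^+\to\O_L\point\tA\to\O_L\point\tA^-\to 0$, combined with the decomposition $\O_L\point\tA^+=\breve{\O}_L\oplus\O_L\point\tA^{++}$ and the vanishing of $H^1(H_{\Q_p}, \O_L\point\tA^{++})$ from Lemma~\ref{ext6}(i). The diagonal torus $\{\matrice{a}{0}{0}{1}:a\in\Q_p^\dual\}\subset\PP$ acts on each module in the exact sequence compatibly with the $G_{\Q_p}$-action used to form the cohomology: an element $\matrice{p^ka}{0}{0}{1}$ with $a\in\Z_p^\dual$ acts as $\varphi^k\sigma_a$, both on the source (via the $\PP$-formula) and on the target (where $\varphi$ acts through its action on $\breve{\O}_L$ and $\Z_p^\dual$ acts as $G_{\Q_p}/H_{\Q_p}$). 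Under the identification $\{1\}\times\Z_p^\dual\times\varphi^\Z\cong\Q_p^\dual$ ($\varphi\leftrightarrow p$), this matches the actions on the two sides and yields (i).

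For part (ii), the plan is to apply the Hochschild--Serre spectral sequence to the extension $1\to H_{\Q_p}'\to H_{\Q_p}\to\sigma_p^{\cZ}\to 1$ with coefficients in $\breve{\O}_L$. The key inputs are: first, $H_{\Q_p}'=\mathrm{Gal}(\overline{\Q}_p/\Q_p^{\rm ab})$ fixes $\Q_p^{\rm unr}\subset\Q_p^{\rm ab}$, hence acts trivially on $W(\overline{\bf F}_p)$ and on $\breve{\O}_L$; by flat base change for continuous cohomology (using that $W(\overline{\bf F}_p)$ is $\Z_p$-flat and $p$-adically complete), one has
\[H^q(H_{\Q_p}',\breve{\O}_L)\cong H^q(H_{\Q_p}',\O_L)\widehat{\otimes}_{\O_L}\breve{\O}_L.\]
Second, $\sigma_p=\varphi$ acts on $\breve{\O}_L$ as Frobenius, and $\varphi-1$ is surjective on $\breve{\O}_L$ (because $x\mapsto x^p-x$ surjects onto the algebraically closed $\overline{\bf F}_p$, whence $\varphi-1$ is surjective on $W(\overline{\bf F}_p)$ by $p$-adic Nakayama, and then on $\breve{\O}_L$). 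This forces $H^i(\sigma_p^{\cZ},\breve{\O}_L)=0$ for $i\geq 1$ and equals $\O_L$ for $i=0$. Consequently the spectral sequence degenerates in low degree to give
\[H^1(H_{\Q_p},\breve{\O}_L)\;\cong\;\big(H^1(H_{\Q_p}',\O_L)\widehat{\otimes}_{\O_L}\breve{\O}_L\big)^{\sigma_p^{\cZ}},\]
the $\sigma_p$-action on the right being the diagonal one (conjugation on the first factor, $\varphi$ on the second).

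To upgrade this fixed-point description to the asserted tensor-product isomorphism I would invoke Galois descent for the continuous extension $\breve{\O}_L/\O_L$ with group $\sigma_p^{\cZ}=\mathrm{Gal}(\overline{\bf F}_p/{\bf F}_p)$: for any $p$-adically complete $\breve{\O}_L$-module $M$ with continuous $\sigma_p^{\cZ}$-semilinear action, the vanishing of $H^1(\sigma_p^{\cZ},\breve{\O}_L)$ makes the natural map $M^{\sigma_p^{\cZ}}\widehat{\otimes}_{\O_L}\breve{\O}_L\to M$ an isomorphism. Applied to $M:=H^1(H_{\Q_p}',\O_L)\widehat{\otimes}_{\O_L}\breve{\O}_L$, whose invariants are $H^1(H_{\Q_p},\breve{\O}_L)$ by the preceding step, this yields the main isomorphism
\[H^1(H_{\Q_p},\breve{\O}_L)\widehat{\otimes}_{\O_L}\breve{\O}_L\;\cong\;H^1(H_{\Q_p}',\O_L)\widehat{\otimes}_{\O_L}\breve{\O}_L.\]
The two displayed consequences follow by taking $\varphi=1$ (respectively $\sigma_p^{\cZ}$) fixed points on either side: after the isomorphism the chosen action fixes the ``outer'' factor, and $\breve{\O}_L^{\varphi=1}=\breve{\O}_L^{\sigma_p^{\cZ}}=\O_L$ (recall $\sigma_p$ and $\varphi$ coincide on $\breve{\O}_L$) extracts the remaining factor, yielding the claimed identifications.

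The principal technical obstacle is the careful handling of continuous cohomology with topologically completed coefficients: both the base-change identity for $H^q(H_{\Q_p}',-)$ and the Galois descent step require arguing mod $p^n$ and passing to the $p$-adic limit, which in turn depends on verifying that the cohomology of $H_{\Q_p}'$ with discrete $\O_L/p^n$-coefficients behaves well under the relevant profinite limits. Once these foundations are in place, the Hochschild--Serre argument together with the vanishing $H^i(\sigma_p^{\cZ},\breve{\O}_L)=0$ for $i\geq 1$ combine directly to produce the stated isomorphisms and their equivariance under $\sigma_p^{\cZ}\times\Z_p^\dual\times\varphi^\Z$.
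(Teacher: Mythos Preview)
Your proposal is correct and follows essentially the same approach as the paper: for (i) both arguments rest on $\varphi$ commuting with $G_{\Q_p}$ together with the $G_{\Q_p}$-equivariance of the connecting map, and for (ii) both use inflation--restriction (the low-degree part of Hochschild--Serre) for $H_{\Q_p}'\lhd H_{\Q_p}$, the vanishing $H^1(\sigma_p^{\cZ},\breve{\O}_L)=0$, and then Hilbert~90/Galois descent (by d\'evissage mod~$p^n$) to upgrade the fixed-point identification to the tensor-product isomorphism. The only cosmetic difference is that the paper phrases the vanishing as ``descente \'etale'' and descent as ``Hilbert~90'', while you spell out the surjectivity of $\varphi-1$ on $\breve{\O}_L$ and the semilinear descent formalism; your worry about the continuous-cohomology technicalities is exactly what the paper handles by the same d\'evissage.
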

\begin{proof} 
{\rm (i)} La fl\`eche $(\O_L\point\widetilde{\A}^{-})^{H_{\Q_p}}\rightarrow H^1({H_{\Q_p}}, \breve{\O}_L)$ commute \`a $\varphi$,
puisque $\varphi$ commute \`a $G_{\Q_p}$.
Par ailleurs, elle est
$G_{\Q_p}$-\'equivariante par d\'efinition de l'application de connexion, et donc $\Z_p^\dual$-\'equivariante
puisque $G_{\Q_p}$ agit \`a travers $G_{\Q_p}/{H_{\Q_p}}\hskip.5mm{=}\hskip.5mm\Z_p^\dual$.

{\rm (ii)} Par descente \'etale, on a $H^i({H_{\Q_p}}/H_{\Q_p}', \breve{\O}_L)=0$, si $i\geq 1$.
 La suite d'inflation-restriction et la nullit\'e de $H^i({H_{\Q_p}}/H_{\Q_p}', \breve{\O}_L)$,
pour $i=1,2$, donnent un isomorphisme 
\[ H^1({H_{\Q_p}}, \breve{\O}_L)\cong (H^1(H_{\Q_p}',\Z_p)\widehat{\otimes} \breve{\O}_L)^{{H_{\Q_p}}/H_{\Q_p}'},\]
o\`u ${H_{\Q_p}}/H_{\Q_p}'\cong \sigma_p^{\widehat{\Z}}$ agit diagonalement.
 Le r\'esultat s'en d\'eduit via Hilbert $90$ dont on d\'eduit par d\'evissage que, si $V$ est 
une $\O_L$-repr\'esentation de ${H_{\Q_p}}/H_{\Q_p}'$, l'application naturelle 
\[\breve{\O}_L\widehat{\otimes}_{\O_L} (\breve{\O}_L\widehat{\otimes}_{\O_L}V)^{{H_{\Q_p}}/H_{\Q_p}'}
\rightarrow \breve{\O}_L\widehat{\otimes}_{\O_L}V\]
est un isomorphisme.  
\end{proof}

\Subsection{Sous-espaces propres pour l'action de $\Q_p^\dual$}
Rappelons que, si $\delta:\Q_p^\dual\to \O_{L}^\dual$ est un caract\`ere unitaire, alors
$\delta(\varphi)=\delta(p)$ mais $\delta(\sigma_p)=\delta(p)^{-1}$. 
Notons
$e_\delta$ la base $1\otimes\delta$ du $(\varphi,\Gamma)$-module ${\O}_L\otimes\delta$.
On a alors $\breve{\O}_{L}\otimes\delta=\breve{\O}_{L}\cdot e_\delta$ en tant que $(\varphi,\Gamma)$-module
(ou $\Q_p^\dual$-module) sur $\breve{\O}_L$.

Notons ${\bf V}(\delta)$ la $\O_L$-repr\'esentation 
$(\O_L\point\tA\otimes\delta)^{\varphi=1}=(\breve{\O}_L\cdot e_\delta)^{\varphi=1}$ de $G_{\Q_p}$; 
le choix
d'une base (i.e.~de $\alpha_\delta\in\breve{\O}_{L}^\dual$
v\'erifiant $\varphi(\alpha_\delta)=\delta(p)^{-1}\alpha_\delta$)
fournit un isomorphisme $\O_L(\delta)\cong {\bf V}(\delta)$.

\begin{lemm}\label{ext9}
On a un isomorphisme:
$$ H^1(H_{\Q_p}',{\bf V}(\delta))^{G_{\Q_p}/H_{\Q_p}'}\overset{\sim}{\to}
(H^1({H_{\Q_p}},\breve{\O}_{L})\otimes e_{\delta})^{\Q_p^\dual}.$$
Plus g\'en\'eralement, si $\delta_1,\delta_2:\Q_p^\dual\to\O_L^\dual$ sont des caract\`eres continus,
$$ H^1(H_{\Q_p}',{\bf V}(\delta_1\delta_2))^{G_{\Q_p}/H_{\Q_p}'}\overset{\sim}{\to}
(H^1({H_{\Q_p}},\breve{\O}_{L}\otimes\delta_1)\otimes e_{\delta_2})^{\Q_p^\dual}.$$
\end{lemm}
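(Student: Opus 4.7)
The plan is to reduce the statement to Lemma \ref{iso1}(ii) by base-changing through $\breve{\O}_L$ and then extracting the ``$\delta$-isotypic component.'' First I would tensor the isomorphism of Lemma \ref{iso1}(ii) with $e_\delta$: since $\sigma_p$ has trivial image in $\Gamma$ (i.e.\ $\chi(\sigma_p)=1$), its action on $e_\delta$ is trivial, so the $\sigma_p^\cZ$-invariants commute with the tensor by $e_\delta$, giving
$$H^1({H_{\Q_p}}, \breve{\O}_L)\otimes e_\delta\;\cong\;\bigl(H^1(H_{\Q_p}', \O_L)\widehat{\otimes}_{\O_L}\breve{\O}_L\otimes e_\delta\bigr)^{\sigma_p^\cZ}.$$
Since $\sigma_p^\cZ$ commutes with $\Q_p^\dual=\Z_p^\dual\times\varphi^\Z$ inside the abelian group $\sigma_p^\cZ\times\Z_p^\dual\times\varphi^\Z$, taking $\Q_p^\dual$-invariants of both sides yields
$$(H^1({H_{\Q_p}}, \breve{\O}_L)\otimes e_\delta)^{\Q_p^\dual}\;\cong\;\bigl(H^1(H_{\Q_p}', \O_L)\widehat{\otimes}_{\O_L}\breve{\O}_L\otimes e_\delta\bigr)^{\sigma_p^\cZ\times\Q_p^\dual}.$$

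Next I would extract the $\varphi^\Z$-invariants first. Since $\varphi$ acts trivially on $H^1(H_{\Q_p}', \O_L)$ (which carries only the $G_{\Q_p}/H_{\Q_p}'=\sigma_p^\cZ\times\Z_p^\dual$-action), one is reduced to identifying $(\breve{\O}_L\otimes e_\delta)^{\varphi=1}$. Using $\alpha_\delta\in\breve{\O}_L^\dual$ with $\varphi(\alpha_\delta)=\delta(p)^{-1}\alpha_\delta$, the element $\alpha_\delta\otimes e_\delta$ is a free rank-$1$ generator of these $\varphi$-invariants, and the resulting $\O_L$-module is precisely ${\bf V}(\delta)$ by definition. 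Since $H_{\Q_p}'$ acts trivially on ${\bf V}(\delta)$ (any continuous character factors through $G_{\Q_p}^{\rm ab}$), one gets $H^1(H_{\Q_p}', \O_L)\widehat{\otimes}_{\O_L}{\bf V}(\delta)=H^1(H_{\Q_p}', {\bf V}(\delta))$, and the residual $\sigma_p^\cZ\times\Z_p^\dual=G_{\Q_p}/H_{\Q_p}'$-action (acting diagonally on both tensor factors) matches the Galois action. Taking these invariants yields $H^1(H_{\Q_p}', {\bf V}(\delta))^{G_{\Q_p}/H_{\Q_p}'}$, as claimed.

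For the general statement, I would use that $H_{\Q_p}$ has trivial image in $\Gamma$, hence acts on $e_{\delta_1}$ trivially, so $H^1({H_{\Q_p}}, \breve{\O}_L\otimes\delta_1)=H^1({H_{\Q_p}}, \breve{\O}_L)\otimes e_{\delta_1}$; multiplicativity of the basis, $e_{\delta_1}\otimes e_{\delta_2}=e_{\delta_1\delta_2}$, then reduces to the case $\delta=\delta_1\delta_2$ treated above. The main technical subtlety is bookkeeping: one must track the three commuting group actions (of $\sigma_p^\cZ$, $\Z_p^\dual$, and $\varphi^\Z$) on the three tensor factors, and verify that the identification $(\breve{\O}_L\otimes e_\delta)^{\varphi=1}\cong{\bf V}(\delta)$ is compatible with the residual $\sigma_p^\cZ\times\Z_p^\dual$-action matching the Galois action on ${\bf V}(\delta)$ (via $\sigma_p\mapsto p^{-1}$ under the class-field identification $G_{\Q_p}^{\rm ab}=\widehat{\Q_p^\dual}$, consistent with the formula $\delta(\sigma_p)=\delta(p)^{-1}$ recalled above). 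Beyond this careful bookkeeping I do not anticipate any serious obstacle.
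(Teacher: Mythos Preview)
Your proposal is correct and follows essentially the same approach as the paper: both arguments reduce to Lemma~\ref{iso1}(ii), identify $\Q_p^\dual$ with $\Z_p^\dual\times\varphi^\Z$, use ${\bf V}(\delta)=(\breve{\O}_L\otimes e_\delta)^{\varphi=1}$ together with $H^1(H_{\Q_p}',{\bf V}(\delta))=H^1(H_{\Q_p}',\O_L)\otimes{\bf V}(\delta)$, and track the three commuting actions. The paper presents this as a single commutative diagram while you unwind the invariants step by step, and your treatment of the ``plus g\'en\'eralement'' part (reducing to $\delta=\delta_1\delta_2$ via $e_{\delta_1}\otimes e_{\delta_2}=e_{\delta_1\delta_2}$) makes explicit what the paper leaves implicit.
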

\begin{proof}
Cela r\'esulte du diagramme commutatif suivant:
$$\xymatrix@R=4mm@C=3mm{
(H^1({H_{\Q_p}},\breve{\O}_{L})\otimes e_\delta)^{\Q_p^\dual}\ar@{=}[r]
&(H^1({H_{\Q_p}},\breve{\O}_{L})\otimes e_\delta)^{\Z_p^\dual\times\varphi^\Z}\ar[d]^-{\wr}\\
H^1(H_{\Q_p}',{\bf V}(\delta))^{G_{\Q_p}/H_{\Q_p}'}\ar@{=}[r]
&(H^1(H_{\Q_p}',\O_L)\wotimes(\breve{\O}_{L}\otimes e_\delta))^{\Z_p^\dual\times \sigma_p^{\cZ}\times\varphi^\Z}
}$$
Dans ce diagramme:

$\bullet$  L'\'egalit\'e de la premi\`ere ligne
est juste l'identification $\Q_p^\dual=\Z_p^\dual\times\varphi^\Z$.

$\bullet$ L'isomorphisme vertical d\'ecoule du lemme~\ref{iso1}.

$\bullet$ On a $H^1(H_{\Q_p}',{\bf V}(\delta))=H^1(H_{\Q_p}',\O_L)\otimes{\bf V}(\delta)$ car $H_{\Q_p}'$
agit trivialement sur ${\bf V}(\delta)$, et
l'isomorphisme ${\bf V}(\delta)\cong (\breve{\O}_{L}\otimes e_\delta)^{\varphi=1}$
est la d\'efinition de ${\bf V}(\delta)$ (cf.~ci-dessus).
\end{proof}

\begin{lemm}\label{ext10}
{\rm (i)} On a:
 
$\bullet$ $H^0(G_{\Q_p},{\bf V}(\delta))\otimes_{\O_L}L=0$ sauf si $\delta=1$ o\`u il est de dimension~$1$.

$\bullet$ $H^1(G_{\Q_p},{\bf V}(\delta))\otimes_{\O_L}L$ est de dimension~$1$ sauf si $\delta=1,\chi$ o\`u il est
de dimension~$2$.

$\bullet$ $H^2(G_{\Q_p},{\bf V}(\delta))\otimes_{\O_L}L=0$ sauf si $\delta=\chi$ o\`u il est de dimension~$1$.

{\rm (ii)} La fl\`eche 
\[H^1(G_{\Q_p},{\bf V}(\delta))\otimes_{\O_L}L\to H^1(H'_{\Q_p},{\bf V}(\delta)\otimes_{\O_L}L
)^{G_{\Q_p}/H'_{\Q_p}}\to (H^1({H_{\Q_p}},\breve{\O}_{L})\otimes_{\O_L}L e_\delta))^{\Q_p^\dual}\]
d\'eduite du lemme~\ref{ext9} est un isomorphisme sauf si $\delta=1$ o\`u elle est identiquement nulle.
\end{lemm}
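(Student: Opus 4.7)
The plan is to handle (i) using standard local Galois cohomology for one-dimensional representations, and (ii) using the inflation--restriction sequence for $H_{\Q_p}' \triangleleft G_{\Q_p}$ combined with a central-element argument on the abelian quotient.

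For (i), identify ${\bf V}(\delta)\otimes_{\O_L} L$ with the $G_{\Q_p}$-representation $L(\delta)$ via local class field theory. Then $H^0(G_{\Q_p}, L(\delta))=L^{\delta}$ is nonzero exactly when $\delta=1$; by Tate local duality $H^2(G_{\Q_p}, L(\delta))$ is the dual of $H^0(G_{\Q_p}, L(\delta^{-1}\chi))$, nonzero exactly when $\delta=\chi$; and the local Euler--Poincar\'e formula
\[
\dim H^0 - \dim H^1 + \dim H^2 = -1
\]
yields $\dim H^1 = 1 + \dim H^0 + \dim H^2$, giving the advertised dimensions in each case.

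For (ii), the natural thing is to insert the map into the inflation--restriction sequence
\[
0 \to H^1(\widehat{\Q_p^\dual}, L(\delta)) \to H^1(G_{\Q_p}, L(\delta)) \to H^1(H_{\Q_p}', L(\delta))^{\widehat{\Q_p^\dual}} \to H^2(\widehat{\Q_p^\dual}, L(\delta))
\]
coming from the normal subgroup $H_{\Q_p}'$ with quotient $G_{\Q_p}/H_{\Q_p}' = \widehat{\Q_p^\dual}$. Lemma~\ref{ext9} identifies the third term with $(H^1(H_{\Q_p},\breve{\O}_L)\otimes L\,e_\delta)^{\Q_p^\dual}$, and the middle arrow is precisely the map of the statement. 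It therefore suffices to show (a) $H^i(\widehat{\Q_p^\dual}, L(\delta))=0$ for $i=1,2$ when $\delta\neq 1$, so that the middle arrow is an isomorphism, and (b) the restriction $H^1(G_{\Q_p}, L)\to H^1(H_{\Q_p}', L)^{\widehat{\Q_p^\dual}}$ is zero when $\delta=1$.

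Point (b) is immediate: $H^1(G_{\Q_p}, L)=\mathrm{Hom}_{\rm cont}(G_{\Q_p}, L)$, and any continuous morphism into an abelian group factors through $G_{\Q_p}^{\rm ab}$, hence vanishes on $H_{\Q_p}'$. Point (a) is the main obstacle, and I would handle it by exploiting the abelian nature of $\widehat{\Q_p^\dual}$: pick $g\in\widehat{\Q_p^\dual}$ with $\delta(g)\neq 1$. Multiplication by $\delta(g)$ on $L(\delta)$ is a $\widehat{\Q_p^\dual}$-equivariant endomorphism; by naturality it induces multiplication by $\delta(g)$ on every $H^i$, but the same induced map is also computed via conjugation by $g$, which is trivial since $g$ is central. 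Hence $\delta(g)-1$ kills $H^i$, and as $\delta(g)-1\in L^\dual$ we get $H^i=0$ in all degrees. Should a more hands-on argument be wanted, one can fall back to K\"unneth applied to the decomposition $\widehat{\Q_p^\dual}=\widehat{\Z}\times\Z_p^\dual$, treating the subcases $\delta|_{\Z_p^\dual}\neq 1$ and $\delta|_{\Z_p^\dual}=1,\ \delta(p)\neq 1$ separately.
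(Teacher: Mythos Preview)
Your proof is correct and follows the same skeleton as the paper's: Euler--Poincar\'e plus Tate duality for (i), and the inflation--restriction (Hochschild--Serre) five-term sequence for $H_{\Q_p}'\triangleleft G_{\Q_p}$ for (ii). The differences lie in how the two subsidiary facts are established. For the vanishing of $H^i(\widehat{\Q_p^\dual},L(\delta))$ when $\delta\neq 1$, the paper invokes the explicit structure $\widehat{\Q_p^\dual}\cong \Z_p^2\times U$ with $U$ profinite of prime-to-$p$ order (up to $\Z/2\Z$ when $p=2$); your central-element argument (conjugation by $g$ is trivial on cohomology, but induces multiplication by $\delta(g)$ via functoriality in the coefficients) is structure-free and works for any abelian group, which is a nice gain. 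For $\delta=1$, the paper argues by dimension count, observing that $\dim_L H^1(\widehat{\Q_p^\dual},L)=2=\dim_L H^1(G_{\Q_p},L)$ so the inflation map is already surjective and the restriction must be zero; your observation that every continuous homomorphism $G_{\Q_p}\to L$ factors through $G_{\Q_p}^{\rm ab}=G_{\Q_p}/H_{\Q_p}'$ is more direct. Both routes are valid, and yours is marginally more economical.
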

\begin{proof}
Le (i) est standard (la preuve utilise la formule d'Euler-Poincar\'e de Tate et la dualit\'e
de Tate pour calculer la dimension des $H^2$ en fonction de celle des $H^0$, ou bien on
peut utiliser la th\'eorie des $(\varphi,\Gamma)$-modules~\cite[chap.\,2]{gaetan} ou \cite[th.\,1.38]{durham}).

Passons au (ii).
La suite spectrale de Hochschild-Serre pour l'extension de groupes 
\[1\rightarrow H_{\Q_p}'\rightarrow G_{\Q_p}\rightarrow \widehat{\Q_p^{\dual}}\rightarrow 1\]
nous donne une suite exacte (on note $\delta$ la $L$-repr\'esentation $L(\delta)$ de $G_{\Q_p}$)
\begin{align}\label{Hosh}
0\rightarrow H^1(\widehat{\Q^{\dual}_p}, \delta)\rightarrow H^1(G_{\Q_p},  \delta)\rightarrow H^1(H_{\Q_p}', \delta)^{\widehat{\Q_p^{\dual}}}\rightarrow  H^2(\widehat{\Q_p^{\dual}}, \delta)\rightarrow H^2(G_{\Q_p}, \delta),\end{align}
Maintenant $\widehat{\Q^{\dual}_p}\cong\Z_p^2\times U$, o\`u $U$ est profini d'ordre premier \`a $p$ 
(\`a $\Z/2\Z$ pr\`es si $p=2$); si on \'ecrit un \'el\'ement de $\Z_p^2\times U$ sous
la forme $(x_1,x_2,u)$, avec $x_1,x_2\in\Z_p$ et $u\in U$, cela permet de factoriser $\delta$
sous la forme $\delta_1(x_1)\delta_2(x_2)\delta_U(u)$, o\`u $\delta_1,\delta_2$ sont des
caract\`eres de $\Z_p$ et $\delta_U$ un caract\`ere de $U$.

Maintenant, $H^i(U,\delta_U)=0$ sauf si $\delta_U=1$ et $i=0$, o\`u ce groupe est de dimension~$1$,
et $H^i(\Z_p,\delta_j)=0$ sauf si $\delta_j= 1$ et $i=0,1$ o\`u ce groupe est de dimension~$1$.
Il s'ensuit, via la formule de K\"unneth ou un d\'evissage utilisant la
suite spectrale de Hochschild-Serre, que:

$\bullet$
$H^i(\widehat{\Q^{\dual}_p}, \delta)=0$, pour tout $i$, si $\delta\neq 1$, ce qui prouve le (ii)
dans ce cas-l\`a.  

$\bullet$ Si $\delta=1$, on a $\dim_LH^1(\widehat{\Q^{\dual}_p}, \delta)=2$
et $\dim_LH^2(\widehat{\Q^{\dual}_p}, \delta)=1$.
On conclut gr\^ace \`a 
ces calculs de dimension et ceux du (i) que la fl\`eche est effectivement nulle dans le cas $\delta=1$.
\end{proof}

\begin{rema}\label{ext11}
Dans le cas pathologique $\delta=1$, la suite exacte~(\ref{Hosh}) montre que
$(H^1({H_{\Q_p}},\breve{L})\otimes e_\delta)^{\Q_p^\dual}\to H^2(\widehat{\Q_p^{\dual}}, \delta)$ est
un isomorphisme, et donc $(H^1({H_{\Q_p}},\breve{L})\otimes e_\delta)^{\Q_p^\dual}$ est de dimension~$1$
mais ne semble avoir aucun lien avec la cohomologie de $G_{\Q_p}$. 
\end{rema}

\section{Description de l'image}\label{chap4}
\Subsection{Vecteurs $\widehat{\Q_p^\dual}$-continus}
\subsubsection{Fonctions presque p\'eriodiques \`a l'infini}
On dit que $\phi\in{\cal C}(\Q_p,\O_L/p^n)$ est {\it p\'eriodique \`a l'infini} si il existe $k\geq 1$
tel que $\phi(x)=\phi(p^{-k}x)$ pour tout $x$ assez grand (i.e., tout $x$ v\'erifiant $v_p(x)\leq -N$ pour $N\in \N$ assez grand).
On dit que $\phi\in{\cal C}(\Q_p,\O_L)$ est {\it presque p\'eriodique \`a l'infini} si 
son image dans ${\cal C}(\Q_p,\O_L/p^n)$
est p\'eriodique
\`a l'infini, pour tout $n$.

On note ${\cal C}^{\rm pp}(\Q_p,\O_L)$ l'espace des fonctions continues, presque p\'eriodiques \`a l'infini
et ${\cal C}^{\rm pp}(\Q_p,\O_L/p^n)$ celui des fonctions continues (et donc localement constantes),
p\'eriodiques \`a l'infini.  
On a ${\cal C}^{\rm pp}(\Q_p,\O_L)=\varprojlim_n{\cal C}^{\rm pp}(\Q_p,\O_L/p^n)$.
Notons que, si $\phi\in {\cal C}(\widehat{\Q_p^{\dual}}, \O_L)$, alors
${\bf 1}_{\Q_p\moins\Z_p}\phi\in {\cal C}^{\rm pp}(\Q_p,\O_L)$ par d\'efinition
de la compl\'etion profinie.

\begin{lemm}\label{ppp}
{\rm (i)} Tout $\phi\in {\cal C}^{\rm pp}(\Q_p,\O_L)$ peut s'\'ecrire, de mani\`ere unique, sous la forme
$\phi_0+{\bf 1}_{\Q_p\moins\Z_p}\phi_\infty$, avec $\phi_0\in{\cal C}(\Q_p,\O_L)_0$
et $\phi_\infty\in {\cal C}(\widehat{\Q_p^{\dual}}, \O_L)$.

{\rm (ii)} ${\cal C}^{\rm pp}(\Q_p,\O_L)$ est stable par $\PP$ et $\phi\mapsto\phi_\infty$ induit une suite
exacte
$$0\to {\cal C}(\Q_p,\O_L)_0\to {\cal C}^{\rm pp}(\Q_p,\O_L)\to {\cal C}(\widehat{\Q_p^{\dual}}, \O_L)\to 0$$
$\PP$-\'equivariante, l'action de $\UU$ sur ${\cal C}(\widehat{\Q_p^{\dual}}, \O_L)$ \'etant triviale.
\end{lemm}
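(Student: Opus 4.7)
I would prove (i) first via a mod-$p^n$ analysis, then deduce (ii) by tracking the $\PP$-action through the decomposition.

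For existence in (i), fix $\bar\phi\in{\cal C}^{\rm pp}(\Q_p,\O_L/p^n)$ with period $k$ and threshold $N$, so $\bar\phi(p^{-k}x)=\bar\phi(x)$ for $v_p(x)\leq-N$. I would extend $\bar\phi|_{\{v_p\leq-N\}}$ by $p^{k\Z}$-invariance to a function $\bar\phi_\infty$ on $\Q_p^\dual$, setting $\bar\phi_\infty(x):=\bar\phi(p^{-jk}x)$ for any $j$ with $v_p(p^{-jk}x)\leq-N$; periodicity makes this well-defined, locally constant and $p^{k\Z}$-invariant. It therefore descends to the compact group $\Q_p^\dual/p^{k\Z}\cong(\Z/k\Z)\times\Z_p^\dual$, and as the target $\O_L/p^n$ is discrete, it factors through a finite quotient of this compact group, hence extends uniquely to a continuous function on $\widehat{\Q_p^\dual}$. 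Then $\bar\phi_0:=\bar\phi-{\bf 1}_{\Q_p\moins\Z_p}\bar\phi_\infty$ vanishes for $v_p(x)\leq-\max(N,1)$ and so lies in ${\cal C}(\Q_p,\O_L/p^n)_0$.

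For uniqueness mod $p^n$: if $\phi_0+{\bf 1}_{\Q_p\moins\Z_p}\phi_\infty=0$ with $\phi_0$ in the ``$0$'' part, then $\phi_\infty$ vanishes on $\{v_p\leq-N'\}\cap\Q_p^\dual$ for some $N'$. Under $\widehat{\Q_p^\dual}\cong\widehat\Z\times\Z_p^\dual$, this set corresponds to $\Z_{\leq-N'}\times\Z_p^\dual$, and $\Z_{\leq-N'}$ is dense in $\widehat\Z$ by the Chinese remainder theorem. Continuity then forces $\phi_\infty=0$, whence $\phi_0=0$. Uniqueness mod each $p^n$ ensures the decompositions are compatible as $n$ varies, and (i) over $\O_L$ follows from ${\cal C}^{\rm pp}(\Q_p,\O_L)=\varprojlim_n{\cal C}^{\rm pp}(\Q_p,\O_L/p^n)$.

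For (ii), I would compute $\matrice{a}{b}{0}{1}\cdot({\bf 1}_{\Q_p\moins\Z_p}\phi_\infty)(x)={\bf 1}_{\Q_p\moins(a\Z_p+b)}(x)\,\phi_\infty((x-b)/a)$ and compare it with ${\bf 1}_{\Q_p\moins\Z_p}(x)\,\psi_\infty(x)$, where $\psi_\infty(y):=\phi_\infty(y/a)$ is a continuous function on $\widehat{\Q_p^\dual}$ (multiplication by $a\in\Q_p^\dual$ being a continuous automorphism of the profinite group). For $v_p(x)\ll 0$, the ratio $(x-b)/a\cdot(x/a)^{-1}=1-b/x$ lies in $1+p^r\Z_p$ for any prescribed $r$, so $\phi_\infty\bmod p^n$---which factors through some finite quotient of $\widehat{\Q_p^\dual}$---takes the same value at the two arguments; the discrepancy between the two expressions therefore tends to $0$ at infinity and lies in ${\cal C}(\Q_p,\O_L)_0$. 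Combined with the $\PP$-stability of ${\cal C}(\Q_p,\O_L)_0$, this yields $\PP$-stability of ${\cal C}^{\rm pp}(\Q_p,\O_L)$, exactness of the quotient sequence, and identifies the induced action on ${\cal C}(\widehat{\Q_p^\dual},\O_L)$ as $\phi_\infty\mapsto\phi_\infty(\cdot/a)$, which is manifestly trivial for $a=1$.

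The main obstacle I expect is the continuous extension of $\bar\phi_\infty$ from $\Q_p^\dual$ to $\widehat{\Q_p^\dual}$: this rests on discreteness of the target $\O_L/p^n$ (so a continuous function on the compact group $\Q_p^\dual/p^{k\Z}$ factors through a finite quotient) together with the density of $\Z_{\leq-N'}$ in $\widehat\Z$ used for uniqueness.
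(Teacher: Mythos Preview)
Your proof is correct and follows essentially the same strategy as the paper: reduce to $\O_L/p^n$, extract the periodic tail $\phi_\infty$, and check that the remainder has compact support. The paper's treatment is terser---it asserts uniqueness of $\phi_\infty$ without invoking the density of $\Z_{\leq-N'}$ in $\widehat\Z$---but your more explicit handling of the extension to $\widehat{\Q_p^\dual}$ via factoring through a finite quotient is a legitimate way to spell out what the paper leaves implicit.

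The only stylistic difference is in (ii): the paper argues directly on $\phi$ itself, showing that $\phi(x-b)-\phi(x)$ vanishes for large $x$ by using the periodicity to rewrite it as $\phi(p^{kq}x-p^{kq}b)-\phi(p^{kq}x)$, where $p^{kq}x$ ranges in a compact set and $p^{kq}b\to 0$; this gives the $\UU$-triviality in one line and lets the rest of the $\PP$-equivariance follow from naturality. You instead track the action through the decomposition and compare $\phi_\infty((x-b)/a)$ with $\phi_\infty(x/a)$, which is a bit more work but yields the explicit formula for the induced $\Q_p^\dual$-action on the quotient. Both arguments rest on the same observation that $1-b/x\to 1$.
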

\begin{proof}
Il suffit de prouver l'\'enonc\'e \'equivalent pour ${\cal C}^{\rm pp}(\Q_p,\O_L/p^n)$ 
et de passer \`a la limite. Soit donc $\phi\in {\cal C}^{\rm pp}(\Q_p,\O_L/p^n)$. Par d\'efinition,
il existe $\phi_\infty:\Q_p^\dual\to\O_L/p^n$, p\'eriodique (i.e.~il existe $k\geq 1$ tel que
$\phi_\infty(p^{-k}x)=\phi_\infty(x)$ pour tout $x\in\Q_p^\dual$) 
co\"{\i}ncidant avec $\phi$ pour $x$
assez grand; $\phi_\infty$ est uniquement d\'etermin\'ee et $\phi-{\bf1}_{\Q_p\moins\Z_p}\phi_\infty$
est \`a support compact. 

Ceci prouve le (i); le (ii) r\'esulte de ce que, si
$\phi\in {\cal C}^{\rm pp}(\Q_p,\O_L/p^n)$, et si $b\in\Q_p$, alors
$\phi(x-b)-\phi(x)$ est identiquement nul pour $x$ assez grand 
(si $\phi(p^{-k}x)=\phi(x)$ pour $x$ assez grand, et si $v_p(x)=-kq+r$, 
avec $0\leq r\leq q-1$, on a $\phi(x-b)-\phi(x)=\phi(p^{kq}x-p^{kq}b)-\phi(p^{kq}x)$,
et les $p^{kq}x$ varient dans un compact tandis que $p^{kq}b\to 0$).
\end{proof}

\begin{coro}\label{image1}
On a des fl\`eches $\Q_p^{\dual}$-\'equivariantes
\[  \frac{{\cal C}(\widehat{\Q_p^{\dual}}, \O_L)}{\{{\rm Ctes}\}}\overset{\sim}{\leftarrow}
\frac{{\cal C}^{\rm pp}(\Q_p, \O_L)}{{\cal C}({\bf P}^1, \O_L)}
\hookrightarrow  \frac{{\cal C}(\Q_p, \O_L)}{{\cal C}({\bf P}^1, \O_L)}.\]
\end{coro}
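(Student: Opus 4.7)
The statement is an essentially formal consequence of Lemma~\ref{ppp}, so the plan is pure bookkeeping. First I would observe that ${\cal C}({\bf P}^1,\O_L)\subset{\cal C}^{\rm pp}(\Q_p,\O_L)$: by the decomposition ${\cal C}({\bf P}^1,\O_L)={\cal C}(\Q_p,\O_L)_0\oplus\O_L{\bf 1}_{{\bf P}^1}$ of~\S\ref{chap2}, both summands are continuous and eventually constant at infinity, hence in particular periodic at infinity. This inclusion induces the right-hand (injective) map
$${\cal C}^{\rm pp}(\Q_p,\O_L)/{\cal C}({\bf P}^1,\O_L)\hookrightarrow{\cal C}(\Q_p,\O_L)/{\cal C}({\bf P}^1,\O_L).$$

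For the left-hand (isomorphism), I would start from the exact sequence of Lemma~\ref{ppp}(ii):
$$0\to{\cal C}(\Q_p,\O_L)_0\to{\cal C}^{\rm pp}(\Q_p,\O_L)\to{\cal C}(\widehat{\Q_p^{\dual}},\O_L)\to 0,$$
and pass to the cokernel of ${\cal C}({\bf P}^1,\O_L)\hookrightarrow{\cal C}^{\rm pp}(\Q_p,\O_L)$. Since ${\cal C}(\Q_p,\O_L)_0\subset{\cal C}({\bf P}^1,\O_L)$, the quotient ${\cal C}({\bf P}^1,\O_L)/{\cal C}(\Q_p,\O_L)_0$ is the line $\O_L\cdot{\bf 1}_{{\bf P}^1}$. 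Applying the decomposition of Lemma~\ref{ppp}(i) to $\phi={\bf 1}_{{\bf P}^1}$ gives $\phi_0={\bf 1}_{\Z_p}\in{\cal C}(\Q_p,\O_L)_0$ and $\phi_\infty=1$, so the image of this line in ${\cal C}(\widehat{\Q_p^{\dual}},\O_L)$ is precisely the constants. Taking the quotient of the exact sequence therefore yields
$${\cal C}^{\rm pp}(\Q_p,\O_L)/{\cal C}({\bf P}^1,\O_L)\overset{\sim}{\to}{\cal C}(\widehat{\Q_p^{\dual}},\O_L)/\{{\rm Ctes}\}.$$

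Finally, both maps are $\PP$-equivariant by Lemma~\ref{ppp}(ii), hence in particular $\Q_p^{\dual}$-equivariant via the embedding $\Q_p^{\dual}\hookrightarrow\PP$, $a\mapsto\matrice{a}{0}{0}{1}$. There is no real obstacle here: all the substantive content (periodicity at infinity, compatibility with the $\PP$-action, identification of the quotient with functions on the profinite completion) is already packaged in Lemma~\ref{ppp}; the corollary just extracts and relabels it.
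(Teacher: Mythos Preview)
Your proof is correct and follows exactly the approach indicated in the paper: the corollary is deduced from Lemma~\ref{ppp} together with the decomposition ${\cal C}({\bf P}^1,\O_L)=\O_L{\bf 1}_{{\bf P}^1}\oplus{\cal C}(\Q_p,\O_L)_0$. You have simply spelled out the bookkeeping that the paper leaves implicit, and your remark that the maps are in fact $\PP$-equivariant (not merely $\Q_p^\dual$-equivariant) is also correct.
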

\begin{proof}
C'est une cons\'equence du lemme~\ref{ppp} et de la d\'ecomposition
${\cal C}(\piqp,\O_L)=\O_L{\bf 1}_{\piqp}\oplus {\cal C}(\Q_p,\O_L)_0$.
\end{proof}

\subsubsection{Presque p\'eriodicit\'e et $\widehat{\Q_p^\dual}$-continuit\'e}
Si $V$ est un $\O_L$-module s\'epar\'e et complet pour la topologie $p$-adique, muni d'une
action continue de $\Q_p^\dual$, on dit que $v\in V$ est {\it $\widehat{\Q_p^\dual}$-continu} si
la fonction
$\gamma\mapsto\gamma\star v$, de $\Q_p^\dual$ dans $V$,
s'\'etend en une fonction continue sur $\widehat{\Q_p^\dual}$.
De mani\`ere \'equivalente, $v$ est $\widehat{\Q_p^\dual}$-continu si, pour tout $n\geq 1$,
$k\mapsto p^k\star v$ est p\'eriodique vue comme fonction de $\Z$ dans $V/p^n V$.
\begin{lemm}\label{image2}
L'injection du cor.\,\ref{image1} identifie $\frac{{\cal C}(\widehat{\Q_p^{\dual}}, \O_L)}{\{{\rm Ctes}\}}$ au sous-espace
des vecteurs $\widehat{\Q_p^\dual}$-continus de $\frac{{\cal C}(\Q_p, \O_L)}{{\cal C}({\bf P}^1, \O_L)}$.
\end{lemm}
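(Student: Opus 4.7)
The strategy is to verify both inclusions. By Corollary~\ref{image1}, the subspace in question is the image of ${\cal C}^{\rm pp}(\Q_p,\O_L)/{\cal C}({\bf P}^1,\O_L)$, so I must show that a class $\bar\phi\in\frac{{\cal C}(\Q_p,\O_L)}{{\cal C}({\bf P}^1,\O_L)}$ is $\widehat{\Q_p^\dual}$-continuous if and only if it admits an almost periodic lift in ${\cal C}(\Q_p,\O_L)$.

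For the forward inclusion, I take $\phi_\infty\in{\cal C}(\widehat{\Q_p^\dual},\O_L)$ with canonical lift $\tilde\phi:={\bf 1}_{\Q_p\moins\Z_p}\phi_\infty\in{\cal C}^{\rm pp}(\Q_p,\O_L)$. For each $n$, continuity on the compact profinite group $\widehat{\Q_p^\dual}$ produces some $N\geq 1$ with $\phi_\infty(p^Nx)\equiv\phi_\infty(x)\pmod{p^n}$ for all $x\in\Q_p^\dual$. Using the identity ${\bf 1}_{\Q_p\moins\Z_p}(p^{-j}x)={\bf 1}_{\Q_p\moins p^j\Z_p}(x)$, one sees that $p^{k+N}\star\tilde\phi-p^k\star\tilde\phi$ reduces modulo $p^n$ to a function supported on the compact set $p^k\Z_p\moins p^{k+N}\Z_p$, hence lies in ${\cal C}(\Q_p,\O_L)_0\subset{\cal C}({\bf P}^1,\O_L)$. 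This yields the periodicity of $k\mapsto p^k\star\bar{\tilde\phi}$ modulo $p^n$ in the quotient, and the $\Z_p^\dual$-continuity follows from uniform continuity of $\phi_\infty$ on the compact group $\widehat{\Q_p^\dual}$.

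For the reverse inclusion, let $\bar\phi$ be a $\widehat{\Q_p^\dual}$-continuous class and let $\phi\in{\cal C}(\Q_p,\O_L)$ be an arbitrary lift; the goal is to show $\phi\in{\cal C}^{\rm pp}(\Q_p,\O_L)$. Fix $n$: the $\widehat{\Q_p^\dual}$-continuity hypothesis yields $q\geq 1$ and a decomposition $p^q\star\phi-\phi = g+p^nf$ with $g\in{\cal C}({\bf P}^1,\O_L)$ and $f\in{\cal C}(\Q_p,\O_L)$. Setting $c:=g(\infty)$, the function $g-c\,{\bf 1}_{{\bf P}^1}$ vanishes at infinity, so $\phi(p^{-q}x)\equiv\phi(x)+c\pmod{p^n}$ for $|x|$ large enough. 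Iterating this relation $k$ times gives $\phi(p^{-kq}x)\equiv\phi(x)+kc\pmod{p^n}$, and choosing $k=p^n$ kills the constant, producing $\phi(p^{-p^nq}x)\equiv\phi(x)\pmod{p^n}$ at infinity, which is the definition of almost periodicity modulo $p^n$. Letting $n$ vary concludes.

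The main obstacle is precisely the nonzero constant $c$ appearing in the reverse direction: a naive reader might hope $\phi$ already satisfies an exact translation identity at infinity up to a compactly supported error, but the constant-at-infinity part of the correction $g$ is a genuine obstruction to this. The key observation is that $c$ is torsion in $\O_L/p^n$, so iteration of the translation absorbs it, at the cost of enlarging the period from $q$ to $p^nq$; this is the only nontrivial move in the proof.
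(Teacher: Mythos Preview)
Your proof is correct and takes a more elementary route than the paper's. The paper works modulo $p^n$, identifies the target with $V_n=\frac{{\cal C}(\Q_p\moins\Z_p,\,\O_L/p^n)}{{\cal C}({\bf P}^1\moins\Z_p,\,\O_L/p^n)}$, and computes $\varinjlim_k H^0(p^{k\N},V_n)$ via the long exact sequence attached to $0\to{\cal C}({\bf P}^1\moins\Z_p,\O_L/p^n)\to{\cal C}(\Q_p\moins\Z_p,\O_L/p^n)\to V_n\to 0$. The obstruction lives in $H^1(p^{k\N},{\cal C}({\bf P}^1\moins\Z_p,\O_L/p^n))$, is carried entirely by the constant line $(\O_L/p^n){\bf 1}_{{\bf P}^1\moins\Z_p}$, and is represented by the coboundary of ${\bf 1}_{\Q_p\moins\Z_p}\,v_p(x/p^k)$; the paper kills it by restricting to the sub-semigroup $p^{p^nk\N}$. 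This is exactly your iteration trick $q\mapsto p^nq$ recast in cohomological language. Your argument bypasses the semigroup-cohomology formalism and handles both inclusions by direct manipulation of representatives, which is shorter and perhaps more transparent for a reader; the paper's packaging has the advantage of making explicit that the constant $c$ you encounter is a genuine $H^1$ class rather than an artifact of the lift. (A small remark: in your forward direction the $\Z_p^\dual$-continuity check is superfluous, since the paper's equivalent characterisation of $\widehat{\Q_p^\dual}$-continuity only involves periodicity of $k\mapsto p^k\star v$ modulo $p^n$.)
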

\begin{proof}
Il s'agit de comprendre quels sont les vecteurs invariants de 
$V_n:=\frac{{\cal C}(\Q_p\moins \Z_p, \O_L/p^n)}{{\cal C}({\bf P}^1\moins\Z_p, \O_L/p^n)}$ sous
l'action du semi-groupe $p^{k\N}$, o\`u $p\star\phi(x)=\phi(x/p)$ si $x\notin\Z_p$ et
$p\star\phi(x)=0$ si $x\in\Z_p$. Pour cela, on utilise la suite exacte de cohomologie
associ\'ee \`a la suite exacte
$$0\to {\cal C}({\bf P}^1\moins\Z_p, \O_L/p^n)\to {\cal C}(\Q_p\moins \Z_p, \O_L/p^n)\to V_n\to 0.$$
On a $H^0(p^{k\N},{\cal C}({\bf P}^1\moins\Z_p, \O_L/p^n))=(\O_L/p^n){\bf 1}_{{\bf P}^1\moins\Z_p}$,
et $H^0(p^{k\N},{\cal C}(\Q_p\moins \Z_p, \O_L/p^n))$ s'identifie au sous-espace
de ${\cal C}(\Q_p^\dual, \O_L/p^n)$ des fonctions v\'erifiant $\phi(p^kx)=\phi(x)$ pour tout $x$;
on a donc 
$$\varinjlim\nolimits_kH^0(p^{k\N},{\cal C}(\Q_p\moins \Z_p, \O_L/p^n))\cong
{\cal C}(\widehat{\Q_p^\dual},\O_L/p^n)$$

Passons au calcul de $H^1(p^{k\N},{\cal C}({\bf P}^1\moins\Z_p, \O_L/p^n))$.
On a une d\'ecomposition $p^{k\N}$-\'equivariante
${\cal C}({\bf P}^1\moins\Z_p, \O_L/p^n)=(\O_L/p^n){\bf 1}_{{\bf P}^1\moins\Z_p}\oplus X_n$,
o\`u $X_n$ est le sous-espace des $\phi$ avec $\phi(\infty)=0$.
On a $H^1(p^{k\N},X_n)=0$ car, si $\phi\in X_n$, l'\'equation $p^k\star\phi'-\phi'=\phi$ admet comme solution
$\phi'$, avec $-\phi'(x)=\phi(x)+\phi(p^{-k}x)+\phi(p^{-2k}x)+\cdots$ (la s\'erie est en fait une somme finie).
Par contre $H^1(p^{k\N}, (\O_L/p^n){\bf 1}_{{\bf P}^1\moins\Z_p})$ est de rang~$1$ sur $\O_L/p^n$,
engendr\'e par le bord de $\phi_k:=1_{\Q_p\moins\Z_p}v_p(x/p^k)$ 
(qui ne v\'erifie pas $p^k\star\phi_k=\phi_k$
mais v\'erifie $p^{p^nk}\star\phi_k=\phi_k$, et donc sa classe de cohomologie meurt en restriction \`a 
$p^{p^nk\N}$).  Il s'ensuit que
$$\varinjlim\nolimits_kH^0(p^{k\N},V_n)\cong \frac{{\cal C}(\widehat{\Q_p^\dual},\O_L/p^n)}{\{{\rm Ctes}\}}$$
Le r\'esultat s'en d\'eduit en passant \`a la limite sur $n$.
\end{proof}

\Subsubsection{Le r\'esultat principal}
\begin{theo}\label{image}
L'application $H^1({H_{\Q_p}}, \breve{\O}_L)\rightarrow 
\frac{{\cal C}(\Q_p, \O_L)}{{\cal C}({\bf P}^1, \O_L)}\otimes\chi^{-1}$ 
du diag.~{\rm(\ref{diagfond})}
se factorise \`a travers $\frac{{\cal C}(\widehat{\Q^{\dual}_p}, \O_L)}{\{{\rm Ctes}\}}$ et induit un 
diagramme ${\Q_p^{\dual}}$-\'equivariant dans lequel les fl\`eches verticales sont des isomorphismes
$$
\xymatrix@R=4mm@C=5mm{0 \ar[r] & \widetilde{\O}_{\cal E}^{-} \ar[r]\ar[d]^-{\wr}
& (\O_L\point\tA^-)^{H_{\Q_p}}\ar[r] \ar[d]^-{\wr}
& H^1({H_{\Q_p}}, \breve{\O}_L)\ar[r]\ar[d]^-{\wr} & 0 \\
0\ar[r]& {\cal C}(\Q_p, \O_L)_0\otimes\chi^{-1}\ar[r] 
& \frac{{\cal C}^{\rm pp}(\Q_p, \O_L)}{\{{\rm Ctes}\}}\otimes\chi^{-1}\ar[r] 
& \frac{{\cal C}(\widehat{\Q_p^\dual}, \O_L)}{\{{\rm Ctes}\}}\otimes\chi^{-1}\ar[r] & 0
}
$$
\end{theo}
\begin{proof}
Compte-tenu du lemme~\ref{ppp}, il suffit de prouver le premier \'enonc\'e et la bijectivit\'e
de la fl\`eche de droite.

Comme l'application $H^1({H_{\Q_p}}, \breve{\O}_L(1))\rightarrow \frac{{\cal C}(\Q_p, \O_L)}{{\cal C}({\bf P}^1, \O_L)}$ 
est $\Q_p^{\dual}$-\'equivariante, le premier \'enonc\'e r\'esulte
 de ce que l'action de $\Q_p^{\dual}$ 
sur $H^1({H_{\Q_p}}, \breve{\O}_L)\subset \breve{\O}_L\widehat{\otimes} H^1(H_{\Q_p}', \Z_p)$ 
est la restriction d'une action continue de $\widehat{\Q_p^\dual}=G_{\Q_p}/H_{\Q_p}'$
et du lemme~\ref{image2}.

L'injectivit\'e de la fl\`eche qui s'en d\'eduit r\'esulte de celle de la fl\`eche initiale.
 Il ne reste que la surjectivit\'e \`a prouver.
 Si $K$ est une extension finie de $\Q_p$, ab\'elienne, notons $\Gamma_K^{'}={\rm Gal}(\Q_p^{\rm ab}/K)$.
 Si $K$ contient $\bmu_{2p}$, alors $\Gamma_K^{'}$ est d'indice fini dans $(1+2p\Z_p)\times \varphi^{\widehat{\Z}}$, et donc est isomorphe \`a $\Z_p\times \widehat{\Z}$. 

Pour prouver la surjectivit\'e, il suffit de la prouver modulo $p$ et, pour cela, il suffit de v\'erifier que les invariants par $\Gamma_K^{'}$ ont la m\^eme dimension pour $K$ assez grand.
 Maintenant, on a $H^1({H_{\Q_p}}, \overline{\bf F}_p(1))=(\overline{\bf F}_p\otimes H^1(H_{\Q_p}', {\bf F}_p(1)))^{{H_{\Q_p}}/H_{\Q_p}'}$ car les $H^i({H_{\Q_p}}/H_{\Q_p}', \overline{\bf F}_p(1))$, pour $i=1,2$, sont nuls vu que ${H_{\Q_p}}/H_{\Q_p}'={\rm Gal}(\overline{\bf F}_p/{\bf F}_p)$ et $\overline{\bf F}_p(1)=\overline{\bf F}_p$ en tant que ${H_{\Q_p}}$-module.
 Or
\[\dim_{{\bf F}_p}(\overline{\bf F}_p\otimes V)^{{\rm Gal}(\overline{\bf F}_p/{\bf F}_p)}=\dim_{{\bf F}_p}V, \]
si $V$ est une ${\bf F}_p$-repr\'esentation lisse de ${\rm Gal}(\overline{\bf F}_p/{\bf F}_p)$.
 On est donc ramen\'e \`a calculer 
$\dim_{{\bf F}_p}H^1(H_{\Q_p}', {\bf F}_p(1))^{\Gamma_K^{'}}$ pour $K$ assez grand 
(et ${\bf F}_p(1)={\bf F}_p$ comme $G_K$-module).

 On a une suite exacte
\[0\rightarrow H^1(\Gamma^{'}_K, {\bf F}_p(1))\rightarrow H^1(G_K, {\bf F}_p(1))\rightarrow H^1(H_{\Q_p}', {\bf F}_p(1))^{\Gamma_K^{'}}\rightarrow H^2(\Gamma_K^{'}, {\bf F}_p(1)).\]
Comme $\Gamma_K^{'}=\Z_p^2\times\Gamma_K^{''}$, 
o\`u $\Gamma_K^{''}$ est un groupe profini de cardinal premier \`a $p$, et comme ${\bf F}_p(1)={\bf F}_p$, on a 
$H^i(\Gamma'_K, {\bf F}_p(1))\cong H^i(\Z_p^2,H^0(\Gamma_K^{''},{\bf F}_p))=
H^i(\Z_p^2,{\bf F}_p)$, et donc
\[\dim_{{\bf F}_p}H^1(\Gamma'_K, {\bf F}_p(1))=2 \text{ et } \dim_{{\bf F}_p}H^2(\Gamma'_K, {\bf F}_p(1))=1.\]
Maintenant $H^i(G_K,{\bf F}_p(1))$ est de dimension $1$, si $i=0,2$,
et donc la formule d'Euler-Poincar\'e plus la dualit\'e locale, donnent 
\[\dim_{{\bf F}_p}H^1(G_K, {\bf F}_p(1))=[K:\Q_p]+2.\]
 On en d\'eduit que 
\[\dim_{{\bf F}_p}H^1(H_{\Q_p}', {\bf F}_p(1))^{\Gamma'_K}\leq [K:\Q_p]+1.\]
Par ailleurs, on a 
\[\dim_{{\bf F}_p}H^1(H_{\Q_p}', {\bf F}_p(1))^{\Gamma'_K}\geq \dim_{\Q_p}H^1(H_{\Q_p}', \Q_p(1))^{\Gamma'_K}\]
et $H^1(H_{\Q_p}', \Q_p(1))^{\Gamma'_K}$ vit dans une suite identique \`a la suite ci-dessus dans laquelle on remplace ${\bf F}_p(1)$ par $\Q_p(1)$.
 La diff\'erence est que l'on a $H^i(\Gamma'_K, \Q_p(1))=0$, si $i=1,2$ (car $H^i(\Gamma_K, \Q_p(1))=0$ si $i=0,1$, o\`u $\Gamma_K=\Gamma'_K\cap \Z_p^{\dual}$).
 La formule d'Euler-Poincar\'e (ou la th\'eorie de Kummer) donne $\dim_{\Q_p}H^1(G_K, \Q_p(1))=[K:\Q_p]+1$ et donc $\dim_{\Q_p}H^1(H_{\Q_p}', \Q_p(1))^{\Gamma'_K}=[K:\Q_p]+1$ et, enfin,
\[\dim_{{\bf F}_p}H^1(H_{\Q_p}', {\bf F}_p(1))^{\Gamma'_K}=[K:\Q_p]+1.\]
Pour calculer $({\cal C}(\widehat{\Q_p^{\dual}}, {\bf F}_p)/{\bf F}_p)^{\Gamma'_K}$, on utilise la suite exacte
\[0\rightarrow {\bf F}_p\rightarrow {\cal C}(\widehat{\Q_p^{\dual}}, {\bf F}_p)^{\Gamma'_K}\rightarrow ({\cal C}(\widehat{\Q_p^{\dual}}, {\bf F}_p)/{\bf F}_p)^{\Gamma'_K}\rightarrow H^1(\Gamma'_K, {\bf F}_p)\rightarrow H^1(\Gamma'_K, {\cal C}(\widehat{\Q_p^{\dual}}, {\bf F}_p)).\]
Or ${\cal C}(\widehat{\Q_p^{\dual}}, {\bf F}_p)\cong {\cal C}(\Gamma'_K, {\bf F}_p)^{[\widehat{\Q_p^{\dual}}: \Gamma'_K]}$ en tant que $\Gamma'_K$-module et le lemme de Shapiro nous donne ${\cal C}(\Gamma'_K,{\bf F}_p)^{\Gamma'_K}={\bf F}_p$ et $H^1(\Gamma'_K, {\cal C}(\Gamma'_K, {\bf F}_p))=0$.
 Comme $\dim_{{\bf F}_p}H^1(\Gamma'_K, {\bf F}_p)=2$, on obtient
\[ \dim_{{\bf F}_p}({\cal C}(\widehat{\Q_p^{\dual}}, 
{\bf F}_p)/{\bf F}_p)^{\Gamma'_K}=[\widehat{\Q_p^{\dual}}: \Gamma'_K]+2-1=[K:\Q_p]+1.\qedhere\]
\end{proof}

\Subsection{Espaces propres pour l'action de $\Q_p^\dual$}
\begin{lemm}\label{ext12}
Soit $\delta:\Q_p^\dual\to\O_L^\dual$ un caract\`ere continu. Alors
$$H^0(\Q_p^\dual,\tfrac{{\cal C}(\widehat{\Q_p^\dual},\O_L)}{\{{\rm Ctes}\}}\otimes e_\delta)=
\begin{cases}\O_L\,\delta\otimes e_\delta&{\text{si $\delta\neq 1$,}}\\ 
{\rm Hom}(\Q_p^\dual,\O_L)\otimes e_\delta&{\text{si $\delta= 1$}}.\end{cases}$$
\end{lemm}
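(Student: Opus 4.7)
Le plan est de traduire la condition d'invariance en une condition sur un $1$-cocycle continu sur $\widehat{\Q_p^\dual}$. Soit $\tilde\phi \in {\cal C}(\widehat{\Q_p^\dual}, \O_L)$ un rel\`evement d'un repr\'esentant $\phi$ du $H^0$; l'invariance de $\phi \otimes e_\delta$ par $\gamma \in \Q_p^\dual$ modulo les constantes signifie que $\delta(\gamma)(\gamma \tilde\phi) - \tilde\phi$ est une fonction constante. En posant $\psi(\gamma) := \tilde\phi(\gamma) - \tilde\phi(1)$, un calcul direct donne la relation de cocycle $\psi(\gamma x) = \delta(\gamma)\psi(x) + \psi(\gamma)$ pour $\gamma \in \Q_p^\dual$ et $x \in \widehat{\Q_p^\dual}$. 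Puisque $\O_L^\dual$ est profini, $\delta$ se prolonge contin\^ument \`a $\widehat{\Q_p^\dual}$, et par densit\'e la relation s'\'etend \`a tout $\gamma \in \widehat{\Q_p^\dual}$. Le $\psi$ ainsi obtenu est ind\'ependant du rel\`evement (car un changement par une constante ne le modifie pas), et la correspondance $\phi \leftrightarrow \psi$ induit un isomorphisme entre le $H^0$ consid\'er\'e et $Z^1_{\rm cont}(\widehat{\Q_p^\dual}, \O_L(\delta))$.

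Pour $\delta = 1$, $Z^1$ est form\'e des homomorphismes continus $\widehat{\Q_p^\dual} \to \O_L$, qui s'identifient naturellement \`a ${\rm Hom}_{\rm cont}(\Q_p^\dual, \O_L)$ via la propri\'et\'e universelle de la compl\'etion profinie (en utilisant que $\O_L$ est pro-$p$). Cela ach\`eve ce cas.

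Pour $\delta \neq 1$, les cobords forment le sous-module $B^1 = \{\gamma \mapsto (\delta(\gamma) - 1) a : a \in \O_L\} \cong \O_L$, qui correspond exactement, via la bijection ci-dessus, au sous-module $\O_L \cdot \delta \otimes e_\delta$ du quotient modulo les constantes. Pour conclure, il reste \`a montrer $Z^1 = B^1$, c'est-\`a-dire $H^1(\widehat{\Q_p^\dual}, \O_L(\delta)) = 0$.

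L'obstacle principal est cette derni\`ere annulation. Pour l'\'etablir, on utilisera la d\'ecomposition $\widehat{\Q_p^\dual} \cong \widehat\Z \times \Z_p^\dual$ et la suite spectrale de Hochschild-Serre. Le facteur fini $\mu_{p-1} \subset \Z_p^\dual$, d'ordre premier \`a $p$, est crucial: lorsque $\delta|_{\mu_{p-1}} \neq 1$, l'argument de Maschke donne d\'ej\`a $H^*(\Z_p^\dual, \O_L(\delta)) = 0$, d'o\`u $H^1(\widehat{\Q_p^\dual}, \O_L(\delta)) = 0$. Dans le cas compl\'ementaire ($\delta|_{\mu_{p-1}} = 1$), on se ram\`ene \`a un calcul sur les composantes procycliques ($\widehat\Z$ et $1 + p\Z_p$), l'annulation provenant de l'inversibilit\'e (apr\`es \'eventuelle inversion de $p$ pour absorber la torsion $p$-adique) de $\delta(\gamma_0) - 1$ pour un g\'en\'erateur topologique $\gamma_0$ bien choisi.
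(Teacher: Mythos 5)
Votre r\'eduction du $H^0$ au groupe des cocycles continus $Z^1_{\rm cont}(\widehat{\Q_p^\dual},\O_L(\delta))$ (normaliser le repr\'esentant par $\tilde\phi(1)=0$, lire la relation de cocycle, l'\'etendre par densit\'e \`a $\widehat{\Q_p^\dual}$) est correcte, et c'est exactement ainsi que le texte traite le cas $\delta=1$; ce cas de votre argument est complet. Le probl\`eme est le cas $\delta\neq1$, o\`u vous ramenez l'\'enonc\'e \`a l'annulation de $H^1_{\rm cont}(\widehat{\Q_p^\dual},\O_L(\delta))$. Cette annulation est \emph{fausse} en g\'en\'eral~: d\`es que $\delta$ est trivial sur la torsion de $\Z_p^\dual$ (cas o\`u Maschke ne donne rien), on a $\delta(\gamma)-1\in\mathfrak{m}_L$ pour \emph{tout} $\gamma$ (les valeurs de $\delta$ sur les parties pro-$p$, \`a savoir $p^{\widehat\Z}$ et $1+2p\Z_p$, sont dans $\bmu_{p^\infty}\cdot(1+\mathfrak{m}_L)$), donc aucun g\'en\'erateur topologique $\gamma_0$ ne rend $\delta(\gamma_0)-1$ inversible. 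Concr\`etement, pour $\delta$ non ramifi\'e avec $\delta(p)=1+p$, la fonction $x\mapsto\frac{\delta(x)-1}{p}$ est un cocycle continu \`a valeurs dans $\O_L$ qui n'est pas un cobord, d'o\`u $H^1_{\rm cont}(\widehat{\Q_p^\dual},\O_L(\delta))\neq0$. Votre parenth\`ese \guillemotleft apr\`es \'eventuelle inversion de $p$\guillemotright{} reconna\^{\i}t la difficult\'e mais ne la r\'esout pas~: apr\`es inversion de $p$ on n'obtient que l'\'enonc\'e pour $L\otimes_{\O_L}(\cdot)$, pas l'\'enonc\'e entier du lemme.

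La fa\c{c}on de traiter $\delta\neq1$ sans aucun calcul de $H^1$ --- et vraisemblablement ce que cache le \guillemotleft imm\'ediat\guillemotright{} du texte --- est d'exploiter la commutativit\'e de $\widehat{\Q_p^\dual}$~: en \'ecrivant la relation de cocycle pour $\psi(\gamma x)$ et pour $\psi(x\gamma)$ et en comparant, on obtient $(\delta(\gamma)-1)\psi(x)=(\delta(x)-1)\psi(\gamma)$ pour tous $\gamma,x$, donc $\psi=c\,(\delta-1)$ avec $c=\psi(\gamma_0)/(\delta(\gamma_0)-1)\in L$~; autrement dit tout cocycle est un multiple (dans $L$) de $\delta-1$, et il ne reste qu'\`a discuter l'int\'egralit\'e de $c$. (C'est pr\'ecis\'ement l\`a que la forme enti\`ere du lemme est d\'elicate~: dans l'exemple ci-dessus $c=1/p$ est admissible, de sorte que $Z^1$ est strictement plus gros que $\O_L\cdot(\delta-1)$~; la version sur $L$, seule utilis\'ee dans la preuve de la prop.\,\ref{ext13}, n'est pas affect\'ee.)
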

\begin{proof}
Le r\'esultat est imm\'ediat si $\delta\neq 1$. Si $\delta=1$, on cherche les fonctions $\phi$ telles
que $x\mapsto\phi(ax)-\phi(x)$ est une constante pour tout $a$ (modulo les $\phi$ constantes).  Quitte
\`a retirer une fonction constante, on peut supposer que $\phi(1)=0$, 
et on a alors $\phi(ax)=\phi(x)+\phi(a)$ pour tous $a\in\Q_p^\dual$ et $x\in \widehat{\Q_p^\dual}$.
Le r\'esultat s'en d\'eduit.
\end{proof}

Nous allons d\'ecrire la compos\'ee $\iota_\delta$ de la
suite d'applications (cf.~(ii) du lemme~\ref{ext10} et diag.\,(\ref{diagfond}))
 \[H^1(G_{\Q_p}, {\bf V}(\delta))\to H^1(H_{\Q_p}', {\bf V}(\delta))^{\widehat{\Q_p^{\dual}}}
\to \big(H^1(H_{\Q_p},\breve{L})\otimes e_{\delta}\big)^{\Q_p^\dual}
\rightarrow \big(\tfrac{{\cal C}(\Q_p, L)}{{\cal C}({\bf P}^{1}, L)}\otimes e_{\chi^{-1}\delta}\big)^{\Q_p^\dual}\]
Pour \'enoncer le r\'esultat, rappelons
que, si $K$ est une extension finie de $\Q_p$, on dispose d'une application de Kummer
$${\rm Kum}_K:K^\dual\to H^1(G_K,\Q_p(1))$$
 dont l'image est un $\Z_p$-r\'eseau de $H^1(G_K,\Q_p(1))$.
Par ailleurs ${\rm Hom}(\Q_p^\dual,L)$ est de dimension~$2$, et poss\`ede une base naturelle
$(v_p,\tau)$, o\`u 
$$\tau=p^{-c(p)}\log,\quad{\text{avec $c(p)=1$ (resp.~$2$) si $p\neq 2$ (resp.~$p=2$)}}$$
et $\log$ est le logarithme d'Iwasawa (d\'efini par $\log p=0$), de telle sorte
que  $(v_p,\tau)$ est une base de ${\rm Hom}(\Q_p^\dual,\O_L)$ sur $\O_L$.
Soit $F=\Q_p(\bmu_{p^{c(p)}})$.
 \begin{prop}\label{ext13}
 {\rm (i)} Si $\delta\neq 1,\chi$, 
l'image de $\iota_\delta$ est $L\cdot\chi^{-1}\delta\otimes e_{\chi^{-1}\delta}$.

{\rm (ii)} Si $\delta=\chi$, l'image de $\iota_\delta$ est ${\rm Hom}(\Q_p^\dual,L)\otimes{e_1}$.
Plus pr\'ecis\'ement, si $\alpha\in\Q_p^\dual$,
\begin{align*}
\iota_\chi({\rm Kum}_{\Q_p}(\alpha))
&=[F:\Q_p](v_p(\alpha)\cdot \tau-\tau(\alpha)\cdot v_p)\otimes{e_1}\\
&=(1-\tfrac{1}{p})(v_p(\alpha)\cdot \log-\log(\alpha)\cdot v_p)\otimes{e_1}
\end{align*}

{\rm (iii)} Si $\delta=1$, alors $\iota_\delta$ est identiquement nulle.
 \end{prop}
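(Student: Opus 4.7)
The plan is to dispose of (i) and (iii) by dimension count combined with Lemmas \ref{ext10} and \ref{ext12}, and to reserve the real effort for the explicit formula in (ii).

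Part (iii) is immediate from Lemma \ref{ext10}(ii), whose proof exhibits the middle arrow in the definition of $\iota_1$ as the zero map. For part (i), Lemma \ref{ext10}(ii) first gives an isomorphism $H^1(G_{\Q_p},{\bf V}(\delta))\otimes_{\O_L}L \overset{\sim}{\to} (H^1(H_{\Q_p},\breve{L})\otimes e_\delta)^{\Q_p^\dual}$ between one-dimensional $L$-spaces; Theorem \ref{image} then identifies the target with $(\frac{{\cal C}(\widehat{\Q_p^\dual},L)}{\{{\rm Ctes}\}}\otimes e_{\chi^{-1}\delta})^{\Q_p^\dual}$, which Lemma \ref{ext12} applied to the nontrivial character $\chi^{-1}\delta$ computes as $L\cdot\chi^{-1}\delta\otimes e_{\chi^{-1}\delta}$. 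This gives the claimed image.

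For part (ii), Lemma \ref{ext10}(i) provides a two-dimensional source and Lemma \ref{ext12} (applied to $\chi^{-1}\chi=1$) provides the two-dimensional target $\mathrm{Hom}(\Q_p^\dual,L)\otimes e_1$; hence $\iota_\chi$ is an isomorphism and the content lies in the explicit formula. By $\Q_p^\dual$-equivariance and $\Z_p$-linearity of the Kummer map in $\alpha$, it suffices to test the formula on $\alpha=p$ (predicted image $[F:\Q_p]\,\tau\otimes e_1$) and on a topological generator $\alpha\in 1+p^{c(p)}\Z_p$ (predicted image proportional to $v_p\otimes e_1$).

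The main obstacle is tracing a Kummer cocycle through the four-step definition of $\iota_\chi$: lift $(\alpha^{1/p^n})_n$ to an element of $\O_L\cdot\tA\otimes\chi$, restrict to $H'_{\Q_p}$ (where, $H'_{\Q_p}$ acting trivially on $\Z_p(1)$, the cocycle becomes the homomorphism dictated by local class field theory), transport through the isomorphism of Lemma \ref{iso1}(ii), and identify the output with a continuous function on $\widehat{\Q_p^\dual}$ using Proposition \ref{ext8} and Theorem \ref{image}. The last step is the delicate one, since Theorem \ref{image} was established by a dimension count rather than an explicit formula. One strategy is to unwind Proposition \ref{ext8} by trivializing the $\UU$-cocycle inside ${\cal C}(\Q_p,\O_L)$ and identifying the resulting function modulo constants as $\widehat{\Q_p^\dual}$-continuous via Lemma \ref{image2}. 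An alternative, more conceptual route is to compare $\iota_\chi$ with the Tate cup-product pairing $H^1(G_{\Q_p},\Q_p(1))\otimes H^1(G_{\Q_p},\Q_p)\to H^2(G_{\Q_p},\Q_p(1))=\Q_p$, which naturally produces the antisymmetric combination $v_p(\alpha)\tau(-)-\tau(\alpha)v_p(-)$ and fixes the normalizing constant ($[F:\Q_p]$ being most likely a typo for $[L:\Q_p]$).
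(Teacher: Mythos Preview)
Your handling of (i) and (iii) is correct and matches the paper's proof exactly.

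For (ii) you have a plan, not a proof, and the plan is missing the key idea. The paper's argument combines both of your proposed strategies, but the technical device that makes the computation feasible is the $(\varphi,\Gamma)$-module description of $H^1(G_{\Q_p},\Q_p(1))$: one writes down an explicit basis $(u_1,v_1),(u_2,v_2)$ of $H^1(\B_{\Q_p}\otimes\chi)$ with $u_1=\tfrac{1}{\pi}+\tfrac12$, $v_2=\tfrac{1}{\pi}$, $v_1\in\A_{\Q_p}^+$, $\psi(u_2)=0$ (Exemple~\ref{aj1}). Your ``second strategy'' (cup product) is then used, not to compute $\iota_\chi$, but to match these classes with Kummer classes: pairing against $v_p,\tau\in H^1(G_{\Q_p},\Q_p)$ and using $\reso$ to evaluate ${\rm Tr}_F\circ h^2_{\Q_p(1)}$ gives $h^{1,\Delta}_{\Q_p(1)}(u_i,v_i)=\pm\tfrac{1}{[F:\Q_p]}{\rm Kum}_{\Q_p}(\alpha_i)$ for $\alpha_1=a$, $\alpha_2=p$ (Proposition~\ref{aj2}). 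Your ``first strategy'' (unwinding Proposition~\ref{ext8}) is then applied to these particular cocycles (Lemme~\ref{aj3}): since $\phi_{u_1}={\bf 1}_{\Z_p}$ and $\phi_{u_2}={\bf 1}_{\Z_p^\dual}(\tau+C)$, the functional equation $\phi_{\tilde c}(p^{-1}x)-\phi_{\tilde c}(x)=\phi_u(x)+{\rm const}$ forces $\phi_{\tilde c}$ to be $v_p$ or $\tau$ modulo constants. Attempting this directly with the Kummer cocycle $\sigma\mapsto\sigma(\alpha^{1/p^n})/\alpha^{1/p^n}$, as you propose, runs into the difficulty that neither the lift $\tilde c$ nor the function $\phi_u$ is explicit.

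Your parenthetical claim that $[F:\Q_p]$ is a typo for $[L:\Q_p]$ is wrong. Here $F=\Q_p(\bmu_{p^{c(p)}})$ is the field over which $\Gamma$ becomes procyclic; the $(\varphi,\Gamma)$-module complex $C^\bullet(D)$ computes $H^i(G_F,-)$, and the factor $[F:\Q_p]$ enters through the relation ${\rm Tr}_F({\rm res}^F_{\Q_p}x\cup{\rm res}^F_{\Q_p}y)=[F:\Q_p]\,{\rm Tr}_{\Q_p}(x\cup y)$ (formula~(\ref{trace})) when one compares $h^{1,\Delta}$ with ${\rm Kum}_{\Q_p}$. The coefficient field $L$ plays no role in this normalization.
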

\begin{proof}Si $\delta\neq\chi$, cela r\'esulte des lemmes~\ref{ext12} et~\ref{ext10}(ii).

Si $\delta=\chi$, 
on utilise la th\'eorie des $(\varphi, \Gamma)$-modules pour d\'ecrire $H^1(G_{\Q_p}, \Q_p(1))$, et le
r\'esultat est une cons\'equence de la prop.\,\ref{aj2} et du lemme~\ref{aj3}.
\end{proof}

\subsubsection{$(\varphi,\Gamma)$-modules et application de Kummer}
Soit $a$ le g\'en\'erateur topologique de $1+p^{c(p)}\Z_p$:
$$a:=\exp(p^{c(p)})$$ 
Alors $\Gamma=\sigma_a^{\Z_p}\times\Delta$, avec $\Delta\cong {\rm Gal}(F/\Q_p)$.

Si $K=\Q_p,F$,
la th\'eorie locale du corps de classes fournit des isomorphismes
\begin{align*}
{\rm cl}_K: {\rm Hom}(K^*, \Q_p)\cong H^1(G_{K}, \Q_p),
\quad {\rm Tr}_{K}: H^2(G_{K}, \Q_p(1))\cong \Q_p
\end{align*}
et, si $\ell\in {\rm Hom}(K^*, \Q_p)$ et $\alpha\in K^\dual$, on a:
\begin{equation}\label{pair}
{\rm Tr}_{K}({\rm cl}_K(\ell)\cup {\rm Kum}_K(\alpha))= \ell(\alpha)
\end{equation}
Par ailleurs, si $x\in H^1(G_{\Q_p},\Q_p)$ et $y\in H^1(G_{\Q_p},\Q_p(1))$,
\begin{align}\label{trace}
{\rm Tr}_F({\rm res}^F_{\Q_p} x\cup {\rm res}^F_{\Q_p} y)&=[F:\Q_p]\,{\rm Tr}_{\Q_p}(x\cup y)
\end{align}

Si $V$ est une $L$-repr\'esentation de $G_{\Q_p}$, on a $H^i(G_{\Q_p},V)=H^i(G_F,V)^\Delta$, et 
si $D={\bf D}(V)$,
les\footnote{Passer par $F$ permet de traiter sur le m\^eme plan les cas $p\neq 2$ o\`u $\Gamma$ est procyclique
et $p=2$ o\`u il ne l'est pas.}
 $H^i(G_F,V)$ sont naturellement isomorphes aux groupes de cohomogie du complexe
 \[C^\bullet(D):=\big[\xymatrix@C=3.5cm{D\ar[r]^-{x\mapsto ( (\varphi-1)x,(\sigma_a-1)\point x)} 
&D\oplus D\ar[r]^-{(u,v)\mapsto (1-\sigma_a)u+(\varphi-1)v} & D }\big] \]
On note $H^i(D)$ (resp.~$Z^i(D)$), pour $i=0,1,2$, les groupes de cohomologie (resp.~de cocycles)
du complexe $C^\bullet(D)$.
On note $h^i_V:H^i(D)\overset{\sim}{\to}H^i(G_F,V)$ l'isomorphisme ci-dessus et aussi
$h^i_V:Z^i(D)\to H^i(G_F,V)$ l'application qui s'en d\'eduit.

Si $(u,v)\in Z^1(D)$, et si $\tilde{c}\in \widetilde{\bf A}\otimes_{\A_{\Q_p}}D$ 
v\'erifie $(\varphi-1)\tilde{c}=u$, le cocycle associ\'e \`a $(u,v)$ est\footnote{
Notons $D_{u,v}$ l'extension de $\oe$ par $D$ associ\'ee \`a $(u,v)$: il existe un rel\`evement
$e\in D_{u,v}$ de $1\in \oe$ avec $(\varphi-1)e=u$ et $(\sigma_a-1)e=v$. Si $V_{u,v}$ est
l'extension de $\O_L$ par $V$ correspondante, on a $V_{u,v}=(\tA\otimes D_{u,v})^{\varphi=1}$
et $e-\tilde c$ est un rel\`evement de $1\in\O_L$ dans $V_{u,v}$; le cocycle correspondant est donc
$\sigma\mapsto (\sigma-1)(e-\tilde c)=\tfrac{\sigma-1}{\sigma_a-1}(\sigma_a-1)e-(\sigma-1)\tilde{c}=
c^{u,v}_\sigma$.}
\begin{equation}\label{coc}
\sigma\mapsto c^{u,v}_\sigma:=\tfrac{\sigma-1}{\sigma_a-1}v-(\sigma-1)\tilde{c}
\in (\widetilde{\bf A}\otimes_{\A_{\Q_p}}D)^{\varphi=1}=V
\end{equation}

Si $h^i_V(z)\in H^i(G_F,V)^\Delta=H^i(G_{\Q_p},V)$, on note $h^{i,\Delta}_V(z)$ l'image
dans $H^i(G_{\Q_p},V)$ (i.e.~${\rm res}_{\Q_p}^Fh^{i,\Delta}_V(z)=h^i_V(z)$).

\begin{exem}\label{aj1}
Soit $\B_{\Q_p}=\A_{\Q_p}[\frac{1}{p}]$.

(i) Si $D=\B_{\Q_p}$, alors $H^1(D)^\Delta$ admet $(1,0)$ et $(0,1)$ comme base.

(ii) Si $D=\B_{\Q_p}\otimes\chi$, 
alors $H^1(D)^\Delta$ admet une base naturelle $(u_1,v_1)$, $(u_2,v_2)$ o\`u
$u_1=\frac{1}{\pi}+\frac{1}{2}$, $v_2=\frac{1}{\pi}$, $v_1\in\A_{\Q_p}^+$ et
$\psi(u_2)=0$.
\end{exem}
(Tout ceci est parfaitement classique: pour construire $(u_1,v_1)$, un petit calcul montre que
$(u\sigma_u-1)\big(\frac{1}{\pi}+\frac{1}{2}\big)\in\pi\A_{\Q_p}^+$ pour tout $u\in\Z_p^\dual$,
et on utilise la bijectivit\'e de $\varphi-1$ sur $\pi\A_{\Q_p}^+$;
pour $(u_2,v_2)$, on utilise la bijectivit\'e de $a\sigma_a-1$ sur $\A_{\Q_p}^{\psi=0}$
et le fait que $\psi(\frac{1}{\pi})=\frac{1}{\pi}$ et donc 
$(\varphi-1)\frac{1}{\pi}\in \A_{\Q_p}^{\psi=0}$.)

\begin{prop}\label{aj2}
{\rm (i)} On a $h^{1,\Delta}_{\Q_p}(1,0)=v_p$ et $h^{1,\Delta}_{\Q_p}(0,1)=\tau$.

{\rm (ii)} On a $h^{1,\Delta}_{\Q_p(1)}(v_1,u_1)=\frac{-1}{[F:\Q_p]}{\rm Kum}_{\Q_p}(a)$
et $h^{1,\Delta}_{\Q_p(1)}(v_2,u_2)=\frac{1}{[F:\Q_p]}{\rm Kum}_{\Q_p}(p)$.
\end{prop}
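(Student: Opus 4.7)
I compute (i) directly from the cocycle formula~\eqref{coc}, combined with the local class field theory conventions recalled in Section~\ref{chap3}. For $(u,v)=(1,0)\in Z^1(\B_{\Q_p})$, pick any $\tilde c\in\widetilde{\bf A}$ with $(\varphi-1)\tilde c=1$; the cocycle $\sigma\mapsto -(\sigma-1)\tilde c$ lies in $\widetilde{\bf A}^{\varphi=1}=\Q_p$, takes the value $-1$ on $\varphi$ and vanishes on $\Gamma$, and so via the normalizations $\sigma_p\leftrightarrow p^{-1}$, $\sigma_b\leftrightarrow b$ corresponds to the character $v_p$ on $\Q_p^\dual$. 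For $(0,1)$, take $\tilde c=0$; the cocycle reduces to $\sigma\mapsto\tfrac{\sigma-1}{\sigma_a-1}\cdot 1$, which via the expansion $\sigma_b=\sigma_a^{\log_a b}$ (the higher-order terms vanishing on the constant $1\in\B_{\Q_p}$) equals $\log_a b=p^{-c(p)}\log b$ on $\Gamma$ and is zero on Frobenius, i.e.~precisely the character $\tau$.

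Part~(ii) follows by duality against part~(i). The space $L\otimes H^1(G_{\Q_p},\Q_p(1))$ is two-dimensional, and by~\eqref{pair} the pairing $(x,y)\mapsto{\rm Tr}_{\Q_p}(x\cup y)$ identifies it with the dual of $L\cdot v_p\oplus L\cdot\tau$; the relevant table of values reads $v_p(p)=\tau(a)=1$ and $v_p(a)=\tau(p)=0$. Hence it suffices, for $i=1,2$, to compute the four cup products $(1,0)\cup(v_i,u_i)$ and $(0,1)\cup(v_i,u_i)$ at the level of Herr's complex $C^\bullet(\B_{\Q_p}\otimes\chi)$, push them to $L$ via $h^{2,\Delta}_{\Q_p(1)}$, take ${\rm Tr}_F$, and divide by $[F:\Q_p]$ using~\eqref{trace}; the four resulting scalars, compared against the table above, uniquely pin down each $h^{1,\Delta}_{\Q_p(1)}(v_i,u_i)$ as the stated scalar multiple of a Kummer class.

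The cup product on Herr's complex is represented by a formula of the form $(u,v)\cup(u',v')=u\otimes\sigma_a(v')-v\otimes\varphi(u')\in C^2(D\otimes D')=D\otimes D'$, and $h^{2,\Delta}_{\Q_p(1)}$ is realised, after trivializing $\Q_p(1)$ by $t=\log(1+\pi)$, as the residue $\reso$. With the explicit representatives $u_1=\tfrac{1}{\pi}+\tfrac{1}{2}$, $v_2=\tfrac{1}{\pi}$, $v_1\in\A_{\Q_p}^+$, and $\psi(u_2)=0$ from Example~\ref{aj1}, each of the four residues is a short calculation using $\psi\circ\varphi={\rm id}$ and the change-of-variable behaviour of $\reso$, and the factor $\tfrac{1}{[F:\Q_p]}$ appears precisely from~\eqref{trace}. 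The main technical obstacle is sign bookkeeping: the asymmetry of~\eqref{coc} in $(u,v)$, the orientation of the cup product, the LCFT convention $\sigma_p\leftrightarrow p^{-1}$, and the sign of $\reso$ must all be aligned in order to produce the sign difference between $h^{1,\Delta}_{\Q_p(1)}(v_1,u_1)=-\tfrac{1}{[F:\Q_p]}{\rm Kum}_{\Q_p}(a)$ and $h^{1,\Delta}_{\Q_p(1)}(v_2,u_2)=\tfrac{1}{[F:\Q_p]}{\rm Kum}_{\Q_p}(p)$.
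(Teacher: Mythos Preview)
Your approach matches the paper's essentially line for line: part~(i) is deduced directly from the cocycle formula~\eqref{coc}, and part~(ii) is obtained by duality, computing the four cup products $h^1_{\Q_p(1)}(u_i,v_i)\cup h^1_{\Q_p}(1,0)$ and $h^1_{\Q_p(1)}(u_i,v_i)\cup h^1_{\Q_p}(0,1)$ via the Herr-complex pairing $\langle\check u,\sigma_a(v)\rangle-\langle\check v,\varphi(u)\rangle$ together with ${\rm Tr}_F\circ h^2_{\Q_p(1)}=\reso(-\tfrac{d\pi}{1+\pi})$, then invoking \eqref{pair} and \eqref{trace}. The paper actually carries out the four residues (using $v_1\in\A_{\Q_p}^+$ and $\psi(u_2)=0$ to kill two of them, and getting $\pm 1$ for the other two), which is exactly the ``short calculation'' you allude to; one minor point of phrasing to tighten is that in~(i) your ``value $-1$ on $\varphi$'' should be ``value $-1$ on a Frobenius lift'', justified by choosing $\tilde c\in W(\overline{\bf F}_p)$ so that inertia acts trivially and Frobenius acts as $\varphi$.
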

\begin{proof}
Le (i) se d\'eduit facilement de la formule~(\ref{coc}) (cf.~\cite[\no3.2.3]{CGN}).
Pour d\'emontrer le (ii), le plus simple est d'utiliser la dualit\'e: 
notons $\check D={\rm Hom}_{\A_{\Q_p}}(D,\A_{\Q_p})\otimes\chi$ le dual de Tate de $D$
(la repr\'esentation associ\'ee est ${\rm Hom}(V,\Q_p(1))$).
Si $(\check u,\check v)\in Z^1(\check D)$ et $(u,v)\in Z^1(D)$, alors (cf.~\cite{Benois} ou~\cite[\no3.2.4]{CGN}):
$$h^1_{\check{V}}(\check u,\check v)\cup h^1_V(u,v)=h^2_{\Q_p(1)}(\langle\check u,\sigma_a(v)\rangle-
\langle \check v,\varphi(u)\rangle)$$
Par ailleurs, si
$z\in \A_{\Q_p}\otimes\chi$, alors $h^2_{\Q_p(1)}(z)\in H^2(G_F,\Q_p(1))$ 
et ${\rm Tr}_F(h^2_{\Q_p(1)}(z))=\reso\big(z\frac{d\pi}{1+\pi}\big)$ 
o\`u $\reso\big(\sum a_k\pi^k\,d\pi\big)=a_{-1}$.

On a $\reso\big(v_1\frac{d\pi}{1+\pi}\big)=0$ car $v_1\in\A_{\Q_p}^+$ et
$\reso\big(u_2\frac{d\pi}{1+\pi}\big)=0$ car $\psi(u_2)=0$. On en d\'eduit les formules
\begin{align*}
{\rm Tr}_F(h^1_{\Q_p(1)}(u_1,v_1)\cup h^1_{\Q_p}(1,0))=0,\quad
{\rm Tr}_F(h^1_{\Q_p(1)}(u_1,v_1)\cup h^1_{\Q_p}(0,1))=1\\
{\rm Tr}_F(h^1_{\Q_p(1)}(u_2,v_2)\cup h^1_{\Q_p}(1,0))=-1,\quad
{\rm Tr}_F(h^1_{\Q_p(1)}(u_2,v_2)\cup h^1_{\Q_p}(0,1))=0
\end{align*}
On en d\'eduit le r\'esultat en utilisant le (i) et la formule (\ref{pair}).
(Le $\frac{1}{[F:\Q_p]}$ provient de la relation~(\ref{trace}).)
\end{proof}

\subsubsection{$(\varphi,\Gamma)$-modules et d\'etermination de $\iota_\chi$}
Soient $D=\A_{\Q_p}\otimes e_\chi$ et $(u,v)\in Z^1(D)$ d'image dans $H^1(D)$ invariante par $\Delta$. 
La restriction
\`a ${H_{\Q_p}}$ de $\sigma\mapsto c^{u,v}_\sigma$ (cf.~formule~(\ref{coc}))
est $\sigma\mapsto -(\sigma-1)\tilde{c}$, et $\tilde c\in(\tA^-)^{H_{\Q_p}}\point e_\chi$ (puisque
$c^{u,v}_\sigma\in\Z_p\point e_\chi$). 

Pour calculer
$\iota_\chi(h^{1,\Delta}_{\Q_p(1)}(u,v))$, il s'agit de calculer la fonction $\phi_{\tilde c}$ associ\'ee
\`a $\tilde c$ par la prop.\,\ref{ext8} (plus pr\'ecis\'ement, il r\'esulte du lemme~\ref{ext12} qu'il
existe $\ell\in {\rm Hom}(\Q_p^\dual,\O_L)$ et $C\in\O_L$ tels que $\phi_{\tilde c}-\ell=C$ sur $p^{-N}\Z_p$ si $N$
est assez grand, et alors $\iota_\chi(h^{1,\Delta}_{\Q_p(1)}(u,v))=\ell$).
Pour ce faire, il faut revenir \`a la preuve de la prop.\,\ref{ext8} dont
nous reprenons certaines des notations ci-dessous.

 Si $b\in \Q_p$, il existe $\tilde{c}(b)\in \widetilde{\bf A}^{++}$ tel que 
$(\sigma-1)\tilde{c}(b)=([\epsilon^b]-1)c^{u,v}_\sigma$, pour tout $\sigma\in {H_{\Q_p}}$, 
et donc $v_b:=([\epsilon^b]-1)\tilde{c}+\tilde{c}(b)\in \widetilde{\bf A}_{\Q_p}$, 
et $v_b$ modulo $\widetilde{\bf A}_{\Q_p}^{++}$ ne d\'epend pas du choix de $\tilde{c}(b)$. 
Notons que l'on peut prendre $\tilde{c}(pb)=\varphi(\tilde{c}(b))$ car $\varphi(c^{u,v}_\sigma)=c^{u,v}_\sigma$
et $\varphi([\epsilon^b]-1)=[\epsilon^{pb}]-1$.
Comme $\varphi(\tilde{c})=\tilde{c}+u$, on a aussi 
$$\varphi(v_b)=v_{pb}+([\epsilon^{pb}]-1)u$$

Soient $\phi_b$ et $\phi_{pb}\in {\cal C}(\Q_p, L)_0$ les fonctions associ\'ees aux images de $v_b, v_{pb}$ 
dans $\widetilde{\bf A}^{-}_{\Q_p}$ et soit $\phi_u\in {\cal C}(\Z_p,\Z_p)$ 
la fonction associ\'ee \`a l'image de $u$ dans ${\bf A}_{\Q_p}^{-}$.
 Alors la relation ci-dessus se traduit par 
\begin{equation}\label{ide}
\phi_b(p^{-1}x)=\phi_{pb}(x)+\phi_u(x-pb)-\phi_u(x), \quad \text{ pour tout } x\in \Q_p.
\end{equation}
Par construction,
$\phi_{\tilde c}\in {\cal C}(\Q_p,\Z_p)$ 
satisfait la relation $\phi_b(x)=\phi_{\tilde c}(x-b)-\phi_{\tilde c}(x)$ pour tous $b,x\in \Q_p$.
 Si on reporte cette relation dans l'identit\'e~(\ref{ide}), 
on obtient que $x\mapsto \phi_{\tilde c}(p^{-1}x)-\phi_{\tilde c}(x)-\phi_u(x)$ 
est invariant par $x\mapsto x-pb$, pour tout $b$, et donc est une constante, ce qui
fournit un moyen de calculer $\phi_{\tilde c}$ si on connait $\phi_u$.

\begin{lemm} \label{aj3}
On a les relations
$$\iota_{\chi}\big(h^{1,\Delta}_{\Q_p(1)}(u_1,v_1)\big)=v_p
\quad{\rm et}\quad
\iota_{\chi}\big(h^{1,\Delta}_{\Q_p(1)}(u_2,v_2)\big)=\tau$$
\end{lemm}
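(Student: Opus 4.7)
The strategy is to compute $\phi_{\tilde c_i}$ modulo constants on $p^{-N}\Z_p$ for $N$ large, using identity~(\ref{ide}) together with an analogous identity for the $\sigma_a$-action, and then identify this function with $v_p$ or $\tau$ via the criterion stated in the paragraph preceding the lemma (combined with Lemma~\ref{ext12}). Identity~(\ref{ide}) gives that $G_\varphi(x) := \phi_{\tilde c}(p^{-1}x) - \phi_{\tilde c}(x) - \phi_u(x)$ is constant on $\Q_p$, equal to $-\phi_u(0)$, where $\phi_u$ is extended by zero outside $\Z_p$. A parallel derivation, starting from the computation of $a\sigma_a(v_b)$ (which, using $a\sigma_a(\tilde c) = \tilde c + v + \xi_a$ for some $\xi_a \in V$, gives $a\sigma_a(v_b) \equiv v_{ab} + ([\epsilon^{ab}]-1)v \pmod{\tA^+}$), yields an analogous $\sigma_a$-identity asserting that $G_{\sigma_a}(x) := \phi_{\tilde c}(a^{-1}x) - \phi_{\tilde c}(x) - \phi_v(x)$ is constant, equal to $-\phi_v(0)$.

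For $(u_1, v_1)$: A direct residue calculation using Prop~\ref{ext4}(ii) gives $\phi_{u_1} \equiv 1$ on $\Z_p$ (the $\tfrac{1}{\pi}$ contributes residue $1$ while $\tfrac{1}{2}$ contributes $0$), and $v_1 \in \A_{\Q_p}^+$ implies $\phi_{v_1} \equiv 0$. The $\varphi$-identity then yields $\phi_{\tilde c_1}(p^{-n}y_0) - \phi_{\tilde c_1}(y_0) = -(n-1)$ for $y_0 \in \Z_p^\dual$, $n \geq 1$, matching the $v_p$-profile on the orbit of $y_0$, while the $\sigma_a$-identity with $\phi_{v_1} \equiv 0$ forces $\phi_{\tilde c_1}$ to be $\sigma_a$-invariant. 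Combined with $\Delta$-invariance of the cohomology class (to handle the various components of $\Z_p^\dual$), this makes $\phi_{\tilde c_1}|_{\Z_p^\dual}$ constant, so $\phi_{\tilde c_1} - v_p$ is constant on $p^{-N}\Z_p$, confirming $\iota_\chi(h^{1,\Delta}_{\Q_p(1)}(u_1, v_1)) = v_p$.

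For $(u_2, v_2)$: The condition $\psi(u_2) = 0$ translates under the dictionary to $\phi_{u_2}|_{p\Z_p} = 0$ (in particular, $\phi_{u_2}(0) = 0$), so the $\varphi$-identity reads $\phi_{\tilde c_2}(p^{-1}x) - \phi_{\tilde c_2}(x) = \phi_{u_2}(x)$, which vanishes on $\Q_p \moins \Z_p^\dual$ and so kills the $v_p$-component of $\phi_{\tilde c_2}$. The residue calculation applied to $v_2 = \tfrac{1}{\pi}$ gives $\phi_{v_2} \equiv 1$, so the $\sigma_a$-identity produces linear growth in $\tau$ on $p^{-N}\Z_p$, yielding $\iota_\chi(h^{1,\Delta}_{\Q_p(1)}(u_2, v_2)) = \tau$ by the same matching argument. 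The principal obstacle is the rigorous derivation of the $\sigma_a$-analogue of identity~(\ref{ide}): one must verify that the correction terms arising from $\xi_a \in V$ contribute only elements of $\tA^{++}$ after multiplication by $[\epsilon^{ab}] - 1$, so as to vanish modulo $\tA^+$, and then track the normalization constants carefully so as to recover $v_p$ and $\tau$ exactly, not nonzero scalar multiples.
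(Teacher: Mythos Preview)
Your approach is correct in spirit but takes a genuinely different route from the paper. You derive and use a $\sigma_a$-identity for $\phi_{\tilde c}$ parallel to the $\varphi$-identity~(\ref{ide}); the paper never does this. Instead, the paper works \emph{only} with the $\varphi$-identity and exploits continuity of $\phi_{\tilde c}$ at $0$ much more aggressively: since $\phi_{\tilde c}(p^{-1}x)-\phi_{\tilde c}(x)=\phi_u(x)+C$ and $\phi_{\tilde c}$ extends continuously to $0$, iterating along $x\mapsto px$ pins down $\phi_{\tilde c}$ on all of $\Z_p$ (in fact makes it constant there when $\phi_u={\bf 1}_{\Z_p}$), and then the recursion propagates outward to $\Q_p\moins\Z_p$, giving $\phi_{\tilde c}=v_p+C'$ directly. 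For $(u_2,v_2)$ the paper does not use $v_2$ via a $\sigma_a$-identity on $\phi_{\tilde c}$; rather it computes $\phi_{u_2}$ explicitly from the cocycle relation $(a\sigma_a-1)u_2=(\varphi-1)v_2$ together with $\psi(u_2)=0$, obtaining $\phi_{u_2}={\bf 1}_{\Z_p^\dual}(\tau+C)$, and then feeds this back into the same $\varphi$-identity.

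Your symmetric approach---reading off $\ell(p)$ from the $\varphi$-identity and $\ell(a)$ from the $\sigma_a$-identity, both evaluated for $x$ outside $\Z_p$---is conceptually cleaner and avoids computing $\phi_{u_2}$ altogether; the paper's route, conversely, avoids having to establish the $\sigma_a$-analogue of~(\ref{ide}), which you yourself flag as the delicate point (it does go through: the correction $\xi_a=-c^{u,v}_{\sigma_a}$ lies in $\Z_p\cdot e_\chi\subset\tA^+$, so $([\epsilon^{ab}]-1)\xi_a\in\tA^{++}$ as you anticipate). One remark: your appeal to ``$\Delta$-invariance of the cohomology class'' to handle the $\Delta$-cosets in $\Z_p^\dual$ is both unnecessary and not quite justified as stated---$\Delta$-invariance of the \emph{class} does not directly give $\Delta$-invariance of the function $\phi_{\tilde c}$. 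You don't need it: once you know a priori from Lemma~\ref{ext12} that the image is some $\ell\in{\rm Hom}(\Q_p^\dual,L)$, the two values $\ell(p)$ and $\ell(a)$ determine $\ell$ completely, since ${\rm Hom}(\Delta,L)=0$.
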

\begin{proof}

$\bullet$ Comme $u_1=\frac{1}{\pi}+\frac{1}{2}$, on a $\phi_{u_1}={\bf 1}_{\Z_p}$, et il existe $C$ tel que
$\phi_{\tilde c}(p^{-1}x)-\phi_{\tilde c}(x)=C+{\bf 1}_{\Z_p}(x)$ pour tout $x\in\Q_p^\dual$.
Comme $\phi_{\tilde c}$ se prolonge par continuit\'e en $0$, on en d\'eduit $C=-1$, 
et $\phi_{\tilde c}=v_p+C'$ sur $\Q_p\moins \Z_p$. On en tire la premi\`ere relation.

$\bullet$ Comme $v_2=\frac{1}{\pi}$, on a $\phi_{v_2}={\bf 1}_{\Z_p}$.
 Donc $\phi_{(\varphi-1)v_2}=-{\bf 1}_{\Z_p^{\dual}}$.
 La relation $(a\sigma_a-1)u_2=(\varphi-1)v_2$ se traduit par\footnote{Le $a$ disparait gr\^ace au
twist $\chi^{-1}$ du dictionnaire d'analyse $p$-adique.}
$\phi_{u_2}(a^{-1}x)-\phi_{u_2}(x)=\phi_{(\varphi-1)v_2}=-{\bf 1}_{\Z_p^{\dual}}$.

 Par ailleurs, comme $\psi(u_2)=0$, la fonction $\phi_{u_2}(x)$ est \`a support dans $\Z_p^{\dual}$.
 Donc, $\phi_{u_2}={\bf 1}_{\Z_p^\dual}\cdot (\tau+C)$, avec $C\in \Z_p$.
Les solutions de $\phi_{\tilde c}(p^{-1}x)-\phi_{\tilde c}(x)=\phi_{u_2}(x)$ admettant un prolongement
continu en $0$ sont de la forme ${\bf 1}_{\Q_p\moins p\Z_p}\tau+ C\,{\bf 1}_{\Z_p} + C'$.
On en tire la seconde relation.
\end{proof}

\section{Application aux repr\'esentations de $\GG$}\label{chap5}
\Subsection{Les s\'eries principales de $\GG$}
\subsubsection{L'espace des param\`etres}
Soit $\widehat{\cal T}^0$ l'espace des caract\`eres unitaires de $\Q_p^\dual$: c'est le produit de l'espace
des caract\`eres de $\Z_p^\dual$ (une r\'eunion de $p-1$ boules ouvertes -- $2$ si $p=2$) 
par celui des caract\`eres unitaires de $p^\Z$ (un cercle).

Soit ${\cal S}^0$ l'\'eclat\'ee de $\widehat{\cal T}^0\times\widehat{\cal T}^0$ en la sous-vari\'et\'e
des $(\delta_1,\delta_2)$ avec $\delta_1=\chi\delta_2$. On a donc un morphisme
surjectif $\pi:{\cal S}^0\to \widehat{\cal T}^0\times\widehat{\cal T}^0$ dont la fibre
en $(\delta_1,\delta_2)$ est un point sauf si $\delta_1=\chi\delta_2$ o\`u cette fibre
est isomorphe \`a $\piqp$. On identifie ce $\piqp$ au projectivis\'e de l'espace des logarithmes
de $\Q_p^\dual$; et si ${\cal L}$ est une droite de l'espace des logarithmes, on note
$(\delta_1,\delta_2,{\cal L})$ le point de ${\cal S}^0$ correspondant.

\subsubsection{La repr\'esentation $V_z$ de $G_{\Q_p}$}
On associe \`a $z\in{\cal S}^0(L)$ une repr\'esentation $V_z$ de $G_{\Q_p}$:

$\bullet$ Si $\delta_1\neq\delta_2,\chi\delta_2$, on prend pour $V_z$ l'unique extension
non triviale de $\delta_2$ par $\delta_1$.

$\bullet$ Si $\delta_1=\chi\delta_2$, on prend pour $V_z$ l'extension de $\delta_2$ par $\chi\delta_2$
dont la classe dans ${\rm Ext}^1(\delta_2,\chi\delta_2)\equiv H^1(G_{\Q_p},L(\chi))$
est l'orthogonal de ${\cal L}$ pour l'accouplement
$H^1(G_{\Q_p},L(\chi))\times H^1(G_{\Q_p},L)\to H^2(G_{\Q_p},L(\chi))=L$
(combin\'ee avec l'identification $H^1(G_{\Q_p},L)\cong {\rm Hom}(\Q_p^\dual,L)$ de
la th\'eorie locale du corps de classe).

$\bullet$ Si $\delta_1=\delta_2$, on pose $V_z=\delta_1\oplus\delta_1$.

\begin{rema}\label{pathos1}
Ce qui se passe au voisinage de $\{\delta_1=\delta_2\}$ est peu clair.
Si $\ell\in{\rm Hom}(\Q_p^\dual,\O_L)$, consid\'erons la famille de repr\'esentations
de $G_{\Q_p}$ de matrice~$\matrice{\exp(u\ell)}{\frac{\exp(u\ell)-1}{u}}{0}{1}$: elle
est scind\'ee pour $u\neq 0$ et, pour $u=0$, on obtient une extension non triviale de $1$ par $1$
dont la classe est donn\'ee par $\ell$.  Il serait peut-\^etre plus naturel de retirer
le lieu $\{\delta_1=\delta_2\}$ de ${\cal S}^0$. 
\end{rema}

\subsubsection{La repr\'esentation $\Pi_z$ de $\GG$}
On associe \`a $z\in{\cal S}^0(L)$ une repr\'esentation $\Pi_z$ de $\GG$ dont la restriction
\`a $\PP$ vit dans une suite exacte $\PP$-\'equivariante, non scind\'ee,
$$0\to \Pi_{z,0}\to\Pi_z\to J_z\to 0$$ o\`u $J_z$ est le module de Jacquet de $\Pi_z$ et,
si $\pi(z)=(\delta_1,\delta_2)$, alors 
$$\Pi_{z,0}\cong{\cal C}(\Q_p,L)_0\otimes\chi^{-1}\delta_1
\quad{\rm et}\quad
J_z\cong \delta_2$$
 La recette est la suivante:

$\bullet$ Si $\delta_1\neq\chi\delta_2$, 
on pose $\Pi_z:={\rm Ind}_\BB^\GG(\delta_2\otimes \delta_1\chi^{-1})$.

$\bullet$ Si $\delta_1=\chi\delta_2$, alors ${\rm Ind}_\BB^\GG(\delta_2\otimes \delta_1\chi^{-1})$
contient une sous-repr\'esentation
de dimension~$1$, isomorphe \`a $\delta_2$ (identifi\'e au caract\`ere
$\delta_2\circ\det$ de $\GG$) et le quotient est ${\rm St}\otimes\delta_2$,
o\`u ${\rm St}$ est la steinberg continue. Si $z=(\delta_1,\delta_2,{\cal L})$, o\`u
${\cal L}$ est une droite de ${\rm Hom}(\Q_p^\dual,L)$, on construit $\Pi_z$
comme une extension non triviale de $\delta_2$ par ${\rm St}\otimes\delta_2$:
on a $\Pi_{(\chi\delta_2,\delta_2,{\cal L})}=\Pi_{(\chi,1,{\cal L})}\otimes\delta_2$
et pour contruire $\Pi_{(\chi,1,{\cal L})}$,
on choisit une base $\ell$ de ${\cal L}$
et on d\'efinit $\Pi_{(\chi,1,{\cal L})}$ comme un sous-quotient
de l'induite de $\BB$ \`a $\GG$ de la repr\'esentation $W_\ell$ de dimension $2$,
o\`u $\matrice{a}{b}{0}{d}$ agit par $\matrice{1}{\ell(a/d)}{0}{1}$.
On a une suite exacte $\GG$-\'equivariante
$$0\to {\rm Ind}_\BB^\GG(1\otimes1)\to {\rm Ind}_\BB^\GG W_\ell\to {\rm Ind}_\BB^\GG(1\otimes1)\to 0$$
et on d\'efinit $\Pi_{(\chi,1,{\cal L})}$ comme le quotient par ${\bf 1}\subset {\rm Ind}_\BB^\GG(1\otimes1)$ (de gauche)
de l'image inverse de ${\bf 1}\subset {\rm Ind}_\BB^\GG(1\otimes1)$ (de droite). On obtient donc
une extension de ${\bf 1}$ par $\big({\rm Ind}_\BB^\GG(1\otimes1)\big)/{\bf 1}={\rm St}$,
comme annonc\'e.

\vskip2mm
Par d\'efinition, ${\rm Ind}_\BB^\GG(\delta_2\otimes \delta_1\chi^{-1})$ est un espace
de fonctions sur $\GG$. En \'evaluant ces fonctions en $\matrice{0}{1}{-1}{x}$, on obtient
un plongement de $\Pi_z$ dans ${\cal C}(\Q_p,L)$, et m\^eme dans ${\cal C}^{\rm pp}(\Q_p,L)$
(si $\delta_1=\chi\delta_1$, cela ne fournit qu'un plongement de ${\rm St}\otimes\delta_2$; pour
obtenir un plongement de $\Pi_z$, on utilise les techniques de la preuve de la prop.\,\ref{ext8}).
On obtient alors un diagramme commutatif, $\PP$-\'equivariant:
$$
\xymatrix@R=4mm@C=5mm{0 \ar[r] & \Pi_{z,0} \ar[r]\ar[d]^-{\wr}
& \Pi_z\ar[r] \ar@{^{(}->}[d]
& J_z\ar[r]\ar@{^{(}->}[d] & 0 \\
0\ar[r]& {\cal C}(\Q_p, L)_0\otimes\chi^{-1}\delta_1\ar[r] 
& {\cal C}^{\rm pp}(\Q_p, L)\otimes\chi^{-1}\delta_1\ar[r] 
& {\cal C}(\widehat{\Q_p^\dual}, L)\otimes\chi^{-1}\delta_1\ar[r] & 0
}$$
et l'image de $J_z$ est $(L\cdot\chi\delta_2\delta_1^{-1})\otimes \chi^{-1}\delta_1$
si $\delta_1\neq\chi\delta_2$, et est ${\cal L}\otimes \delta_2$ si $\delta_1=\chi\delta_2$.

On a ${\bf V}(\Pi_z)={\bf V}(\delta_1)$, ce qui fournit un morceau de la repr\'esentation $V_z$
(et donc aussi une partie de la description de $z$) mais il semble difficile de r\'ecup\'erer
le reste en utilisant le foncteur ${\bf V}$.  Mais la construction de ${\bf V}$ n'utilise
que $\Pi_{z,0}$ et le th.\,\ref{catego} ci-dessous montre que $J_z$ contient le reste
de l'information.

On a un diagramme commutatif (on passe de la premi\`ere \`a la seconde ligne en inversant
le diagramme du th.\,\ref{image}; 
les deux fl\`eches verticales de droite sont surjectives mais pas injectives -- leurs noyaux
sont les fonctions constantes):
\[\xymatrix@R=4mm@C=5mm{
0\ar[r]& {\cal C}(\Q_p, L)_0\otimes\chi^{-1}\delta_1\ar[r]\ar[d]^-{\wr}
& {\cal C}^{\rm pp}(\Q_p, L)\otimes\chi^{-1}\delta_1\ar[r]\ar[d]          
& {\cal C}(\widehat{\Q_p^\dual}, L)\otimes\chi^{-1}\delta_1\ar[r]\ar[d] & 0\\
0\ar[r]& \we^-\otimes\delta_1\ar@{=}[d]\ar[r]&(\tB^-)^{H_{\Q_p}}\otimes\delta_1\ar@{=}[d]\ar[r]
&H^1(H_{\Q_p},\breve{L})\otimes\delta_1\ar@{=}[d]\ar[r]&0\\
0\ar[r]& \widetilde D({\bf V}(\delta_1))\ar[r]& (\tB^-\otimes {\bf V}(\delta_1))^{H_{\Q_p}}\ar[r]
&H^1(H_{\Q_p},\breve{L}(\delta_1))\ar[r]&0
}\]
En combinant ce diagramme avec le diagramme pr\'ec\'edent, on obtient
une injection $\PP$-\'equivariante $\Pi_z\hookrightarrow (\tB^-\otimes {\bf V}(\Pi_z))^{H_{\Q_p}}$
qui \'etait le point de d\'epart de cet article.  On obtient aussi une injection
$J_z\hookrightarrow H^1(H_{\Q_p},\breve{L}(\delta_1))$.  Rappelons que le groupe de droite
contient ${\rm Ext}({\bf V}(\delta),{\bf V}(\delta_1))$ pour tout $\delta$.
\begin{theo}\label{catego}
Si $\delta_1\neq \delta_2$,
l'image de $J_z$ dans $H^1(H_{\Q_p},\breve{L}(\delta_1))$ est une droite de
${\rm Ext}({\bf V}(\delta_2),{\bf V}(\delta_1))$ et l'extension de
${\bf V}(\delta_2)$ par ${\bf V}(\delta_1)$ correspondante est isomorphe \`a~$V_z$.
\end{theo}
\begin{proof}
Que l'image soit une droite est la traduction du fait que $J_z$ est de dimension~$1$ et n'est pas
contenue dans le noyau. Qu'elle soit incluse dans ${\rm Ext}({\bf V}(\delta_2),{\bf V}(\delta_1))$
r\'esulte de la $\PP$-\'equivariance et de la d\'etermination des sous-espaces propres
pour l'action de $\Q_p^\dual$ (cf.~(ii) du lemme~\ref{ext10}). 
Si $\delta_1\neq\chi\delta_2$, il y a une seule
extension possible, ce qui d\'emontre le r\'esultat dans ce cas.  Si $\delta_1=\chi\delta_2$,
que l'extension obtenue soit $V_z$ est une traduction du (ii) de la prop.\,\ref{ext13}.
\end{proof}

\begin{rema}\label{pathos2}
Si $\delta_1=\delta_2$, l'image de $J_z$ dans $H^1(H_{\Q_p},\breve{L}(\delta_1))$ fournit
une classe de $H^2(\widehat{\Q_p^\dual},L)$ (cf.~rem.\,\ref{ext11}), ce qui semble difficile
\`a interpr\'eter.  Ce n'est probablement pas sans lien avec la rem.\,\ref{pathos1}.
\end{rema}

\end{document}